\documentclass[11pt]{article}

\usepackage{latexsym}
\usepackage{amsmath}
\usepackage{amsthm}
\usepackage{amsfonts}
\usepackage{eufrak}
\usepackage{amssymb}
\usepackage{verbatim}
\usepackage{graphicx, color,mathrsfs}
\usepackage{amscd}

\usepackage[all]{xy}

\usepackage[numbers,sort&compress]{natbib}
\usepackage{hyperref}

\usepackage{amsfonts}

\usepackage{geometry} 
\geometry{margin=1.25in} 



\renewcommand{\mod}[1]{{\ifmmode\text{\rm\ (mod~$#1$)}\else\discretionary{}{}{\hbox{ }}\rm(mod~$#1$)\fi}}

\def\zz{\mathbb{Z}_N}
\def\ee{\mathbb{E}}

\def\cc{\mathbb{C}}

\def\zz{\mathbb{Z}_N}
\def\zmstar{\mathbb{Z}_m^*}
\def\ee{\mathbb{E}}

\def\cc{\mathbb{C}}


\newcommand{\ktup}{b_1,\dots,b_k}
\newcommand{\rpa}{r_p(b_1,\dots,b_k)}

\newcommand{\tup}[2]{#1_1,\dots,#1_{#2}}

\newcommand{\Z}{{\mathbb Z}}
\newcommand{\lcm}{{\rm lcm}}

\newcounter{claim}
\setcounter{claim}{1}

\theoremstyle{plain} 
\theoremstyle{plain} \newtheorem{proposition}{Proposition}
\theoremstyle{remark} \newtheorem{remark}[proposition]{Remark}
\theoremstyle{plain} \newtheorem{lemma}[proposition]{Lemma}
\theoremstyle{plain} \newtheorem{theorem}[proposition]{Theorem}
\theoremstyle{plain} 
\theoremstyle{plain} 
\theoremstyle{plain} 

\title{On Sums of Sets of Primes with Positive Relative Density}
\author{Karsten Chipeniuk and Mariah Hamel}

\begin{document}

\maketitle

\let\thefootnote\relax\footnotetext{2000 \emph{Mathematics Subject Classification} 11B75, 11P35}
\let\thefootnote\relax\footnotetext{The first author is supported by an NSERC Postgraduate Scholarship.  This research was supported in part by NSF VIGRE grant DMS-0738586.  Some of this work was carried out while the authors were visiting the Fields Institute.}

\begin{abstract} In this paper we show that if $A$ is a subset of the primes with positive relative density $\delta$, then $A+A$ must have positive upper density $C_1\delta e^{-C_2(\log(1/\delta))^{2/3}(\log\log(1/\delta))^{1/3}}$ in the natural numbers.  Our argument uses the techniques developed by Green and Green-Tao in their work on arithmetic progressions in the primes, in combination with a result on sums of subsets of the multiplicative subgroup of the integers modulo $m$.
\end{abstract}

\section{Introduction}

In recent years there has been much progress made toward understanding additive properties of the primes.  One of the first important structural results on the primes is due to Van der Corput \cite{CORPUT1939}, who showed that the primes contain infinitely many three term arithmetic progressions.  More recently, Green \cite{GRE2005} proved a version of Roth's theorem, by showing the existence of three term arithmetic progressions in subsets of the primes which have positive relative density.  In 2004 Green and Tao \cite{GT2008} proved the celebrated theorem that the primes contain arbitrarily long arithmetic progressions.

The strategy developed by Green and Green-Tao is to embed the primes in a `random' set where they have positive relative density and to apply a relative version of Szemeredi's theorem which holds in this setting.  Extending results from additive number theory to the setting of random sets with asymptotic density $0$ was first considered by Kohayakawa-{\L}uczak-R\"{o}dl \cite{KLR} who proved a variant of Roth's theorem.  An alternate proof of this version of Roth's theorem is proved in Tao-Vu \cite{TV2006} and lends itself to adaptation in the primes (similar to Green's proof of Roth's theorem in the primes and used recently in \cite{Thai2009}).  This method of embedding the primes in a `random' set suggests that one should be able to prove other results which are known in a random setting to that of the primes.  A result of \L aba and the second author \cite{HL2008} says that if $A$ is a subset of a random set in $\zz:=\mathbb{Z}/N\mathbb{Z}$ with positive relative density, then $A+A$ must have positive density in $\zz$.

\begin{theorem}\label{sumsets-size}
Suppose that $S$ is a random subset of $\zz$ such that the events
$x\in S$, where $x$ ranges over $\zz$, are independent and have
probability $p=p(N)\in(CN^{-\theta},1]$, where $0<\theta<1/140$. Then
for every $\beta<\alpha$, the statement
\begin{quote}
for every set $A\subset S$ with $|A|\geq \alpha |S|$,
we have $|A+A|\geq \beta N$
\end{quote}
is true with probability $1-o_{\alpha,\beta}(1)$ as $N\to\infty$.

\end{theorem}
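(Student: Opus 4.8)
The plan is to follow the transference strategy of Green and Green--Tao, in the random-set model where it is cleanest. Work in $\zz$ and pass to the normalized majorant $\nu=(N/|S|)1_S$. I would first record the pseudorandom properties of $S$ that hold with probability $1-o_{\alpha,\beta}(1)$: (a) $|S|=(1+o(1))pN$; (b) the Green--Tao restriction estimate $\sum_{\xi\in\zz}|\widehat f(\xi)|^q\ll_q|S|^q$ for every $f$ with $|f|\le 1_S$, for a fixed exponent $q=q(\theta)\in(2,4)$ --- this is where $\theta<1/140$ is spent, since one needs $q>2/(1-\theta)$ together with the linear-forms/correlation conditions for $\nu$, which for a genuinely random set are verified directly; and (c) equidistribution $|S\cap P|=(1+o(1))p|P|$ for every arithmetic progression $P\subseteq\zz$ with $|P|\ge N^{1-\theta}(\log N)^2$. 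Items (a) and (c) are Chernoff plus a union bound; (b) is the restriction theorem applied to $\nu$.

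Fix such an $S$ and $A\subseteq S$ with $|A|\ge\alpha|S|$. The second step is a Fourier reduction. The function $1_A\ast 1_A$ is nonnegative, supported exactly on $A+A$, and satisfies $(1_A\ast 1_A)(x)=N^{-1}\bigl(|A|^2+\sum_{\xi\ne0}\widehat{1_A}(\xi)^2 e(x\xi/N)\bigr)$; bounding the bad set $\{x:(1_A\ast1_A)(x)=0\}$ by Chebyshev and Parseval gives
\[
|A+A|\;\ge\;N\Bigl(1-|A|^{-4}\sum_{\xi\ne0}|\widehat{1_A}(\xi)|^4\Bigr),
\]
so it suffices to prove $\sum_{\xi\ne0}|\widehat{1_A}(\xi)|^4\le(1-\beta)|A|^4$. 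The third step is a structure/randomness dichotomy: by Hölder and (b), $\sum_{\xi\ne0}|\widehat{1_A}(\xi)|^4\le\bigl(\max_{\xi\ne0}|\widehat{1_A}(\xi)|\bigr)^{4-q}\sum_\xi|\widehat{1_A}(\xi)|^q\ll_q\bigl(\max_{\xi\ne0}|\widehat{1_A}(\xi)|\bigr)^{4-q}|S|^q$, so if we fix $\rho=\rho(\alpha,\beta,\theta)>0$ with $C_q\rho^{4-q}\le(1-\beta)\alpha^4$ then the desired bound holds as soon as $\max_{\xi\ne0}|\widehat{1_A}(\xi)|\le\rho|S|$, using $|A|\ge\alpha|S|$. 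Otherwise there is $\xi^\ast\ne0$ with $|\widehat{1_A}(\xi^\ast)|>\rho|S|\ge\rho|A|$. Then a standard density increment --- partition $\zz$ into $O(1/\rho)$ translates of a progression $P$ of common difference $(\xi^\ast)^{-1}$ and length $\asymp\rho N$ on which $e(\cdot\,\xi^\ast/N)$ is essentially constant --- yields a sub-progression $P$ with $|A\cap P|/|P|\ge(|A|/N)(1+c\rho)$; since $|P|\ge N^{1-\theta}(\log N)^2$, item (c) upgrades this to a relative-density gain $|A\cap P|/|S\cap P|\ge\alpha(1+c\rho)(1-o(1))$. One then rescales $P$ to a cyclic group and iterates, with $S\cap P$ playing the role of the random set; the relative density grows geometrically and cannot exceed $1$, so the process halts after $O_{\alpha,\beta,\theta}(1)$ steps, necessarily in the uniform alternative.

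The main obstacle is closing this iteration \emph{with the sharp constant}, i.e.\ deducing $|A+A|\ge\beta N$ for every $\beta<\alpha$ rather than $\ge c(\alpha,\beta)N$ for some smaller $c$. Two points require care. First, each increment step shrinks the host progression by a factor $\asymp\rho$, so after $O_{\alpha,\beta,\theta}(1)$ steps the working scale is only a fixed power of $N$; one must check that (b), (c), and the accumulated $o(1)$ errors all survive there. Second, and more seriously, the sumset is not local: $|(A\cap P)+(A\cap P)|\le|A+A|$ gives no information relative to $|P|$, so running the Fourier reduction inside $P$ at the last step only produces $|A+A|\ge\beta|P|\ll\beta N$. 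Getting around this forces one to run the dichotomy with a threshold $\rho$ small enough that the uniform alternative yields not $\beta|P|$ but essentially \emph{all} of $P$ (equivalently $\sum_{\xi\ne0}|\widehat{1_{A\cap P}}(\xi)|^4=o(|A\cap P|^4)$), and then to choose the parameters so that the surviving scale-fraction times this near-full density still dominates $\beta N$ --- balancing "$\rho$ small enough for near-fullness" against "$\rho$ not so small that the scale collapses before the increments terminate" is the heart of the matter, and may well require replacing the naive recursion by an argument that treats the bounded-rank structured part of $A$ globally rather than by repeatedly passing to sub-progressions.
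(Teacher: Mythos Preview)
The paper does not prove Theorem~\ref{sumsets-size}; it is quoted from \cite{HL2008}. However, the paper does reproduce the relevant argument in the primes setting (Lemmas~\ref{decomposition} and~\ref{convolution}), and that argument specializes to random sets without change, so one can compare against it.

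Your density-increment scheme has a genuine gap, and it is precisely the one you flag in your final paragraph. The iteration cannot deliver the sharp constant $\beta<\alpha$: each increment step shrinks the ambient progression by a factor $\asymp\rho$, and the number of steps needed to exhaust the density is $\asymp 1/(c\rho)$, so the terminal progression $P$ has size $\rho^{O(1/\rho)}N=o(N)$. Even if the uniform alternative at the last step gave you all of $P$, you would recover only $o(N)$, not $\beta N$. Trying to make $\rho$ depend on the step to balance these losses does not help, because the sumset bound is not inherited upward from sub-progressions.

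The approach in \cite{HL2008}, mirrored in Lemma~\ref{convolution} here, sidesteps the iteration entirely. One applies the Green--Tao Bohr-set decomposition once at the top scale: with $f=(N/|S|)1_A$ majorized by the pseudorandom $\nu=(N/|S|)1_S$, write $f=f_1+f_2$ where $0\le f_1\le 1+\sigma$, $\mathbb{E}f_1=\mathbb{E}f\ge\alpha$, $\|\widehat{f_2}\|_\infty\le\epsilon_0$, and $\|\widehat{f_i}\|_q\ll 1$ (this last uses your restriction input (b)). Then $\|f_1\ast f_1\|_1=(\mathbb{E}f_1)^2N^2\ge\alpha^2N^2$ while $\|f_1\ast f_1\|_\infty\le(1+\sigma)\alpha N$, so a pigeonhole gives $|\{x:f_1\ast f_1(x)>\sigma\alpha N\}|\ge(\alpha-3\sigma)N$. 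The three cross terms $f_i\ast f_j$ with $(i,j)\neq(1,1)$ are handled in $L^2$ via $\|\widehat{f_i}\|_\infty^{1/2}\|\widehat{f_j}\|_\infty^{1/2}\|\widehat{f_i}\|_3^{3/2}\|\widehat{f_j}\|_3^{3/2}\ll\epsilon_0^{1/2}$, so by Chebyshev each perturbs the support by at most $\sigma N$. Hence $|\{f\ast f>0\}|\ge(\alpha-6\sigma)N$, and since $\mathrm{supp}(f\ast f)\subseteq A+A$ one takes $\sigma=(\alpha-\beta)/6$. The point is that the bounded model $f_1$ lives on all of $\zz$, so the $L^1$--$L^\infty$ pigeonhole is run at full scale and the constant comes out sharp; this is exactly the ``global treatment of the structured part'' you anticipate in your last sentence, and it replaces the iteration rather than repairing it.
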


The main result of this paper is a version of Theorem \ref{sumsets-size} where $A$ is replaced by a relatively dense subset of the primes, which take the role of the random set $S$.  We should note that it is known that if $\mathcal{P}$ is the set of primes then the density of $\mathcal{P}+\mathcal{P}$ in the natural numbers is $1/2$ (see, for example, \cite{V1997}).

\begin{theorem}\label{sumsetsinP}  Let $A$ be a subset of the primes with positive relative density $0<\delta_0<1$.  Then there exist absolute constants $C_1$ and $C_2$ such that $A+A$ has positive upper density at least 
\begin{equation}\label{density}C_1\delta_0 e^{-C_2(\log(1/\delta_0))^{2/3}(\log\log(1/\delta_0))^{1/3}}
\end{equation}
in the natural numbers.

\end{theorem}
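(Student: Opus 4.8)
The plan is to run the transference technology of Green and Green--Tao and feed into it the combinatorial input on sumsets in $\zmstar$ that the abstract advertises. First I would set up the reduction. Since $A$ has positive relative density $\delta_0$, there are infinitely many $N$ with $|A\cap[N]|\ge\tfrac12\delta_0\pi(N)$; fix such an $N$, choose a prime $N''\in(2N,4N]$ so that sums $a+a'$ with $a,a'\le N$ do not wrap around in $\Z_{N''}$, and set $W=\prod_{p\le w}p$ for a parameter $w=w(N)\to\infty$ slowly enough that $W\le(\log N)^{o(1)}$ (so Siegel--Walfisz applies). Pigeonholing over reduced residues $b\bmod W$, fix $b$ with $|\{a\in A\cap[N]:a\equiv b\pmod{W}\}|\ge\tfrac{\delta_0}{2}\pi(N;b,W)$, and pass to the normalized function $f$ on $\Z_{N''}$ supported on the image of these primes, with $0\le f\le\nu$ and $\mathbb{E}_{\Z_{N''}}f\gg\delta_0$, where $\nu$ is a Goldston--Yıldırım majorant satisfying the linear forms and correlation conditions. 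Since $A+A$ contains the support of $1_A*1_A$, the goal becomes a lower bound for $\mathbb{E}_{\Z_{N''}}\mathbf 1_{\{f*f>0\}}$.

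Next I would invoke Green's restriction estimate for the primes, $\|\widehat f\,\|_q\ll_q 1$ for every fixed $q>2$, together with the dense model theorem, which yields a genuinely bounded $g\colon\Z_{N''}\to[0,1]$ with $\mathbb{E}g\gg\delta_0$ and $\|f-g\|_{U^2}=o(1)$. Truncating the representation function at a threshold $M$ and using $(f*f-M)_+\le M^{1-s}\|f*f\|_s^{s}\le M^{1-s}\|\widehat f\,\|_{2s'}^{2s}$ with $1/s+1/s'=1$, one gets $\mathbb{E}_n\mathbf 1_{\{f*f>0\}}\gg(\delta_0/C_q)^{q}$ after optimizing $M$, and then optimizing $q\downarrow 2$ against the restriction constant produces a lower bound of the form $\delta_0^{\,2+o(1)}$, i.e.\ some fixed power of $\delta_0$. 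This already shows that $A+A$ has positive upper density, but it is weaker than \eqref{density}, and the improvement must come from multiplicative structure.

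For the sharp bound I would exploit that the primes reduce mod $m$, for $m\mid W$ a product of the first $k$ primes with $k=k(\delta_0)$ to be optimized, to a set equidistributed over $\zmstar$. By equidistribution and pigeonholing there is $B'\subseteq\zmstar$ with $|B'|\gg\delta_0\,\phi(m)$ such that for every $b\in B'$ the set $A$ has relative density $\gg\delta_0$ among the primes $\equiv b\pmod{m}$. The paper's result on sums of subsets of $\zmstar$ then gives $|B'+B'|\ge m\cdot c\,\delta_0\,e^{-C(\log(1/\delta_0))^{2/3}(\log\log(1/\delta_0))^{1/3}}$. For each residue $r\in B'+B'$ one writes $r=b_1+b_2$ with $b_1,b_2\in B'$ and runs the transference argument of the previous paragraph in its two-set form, at scale $N$ inside the single progression $\{n\equiv r\pmod{m}\}$, applied to the pieces $A\cap\{p\equiv b_i\pmod m\}$; this puts a positive proportion of that progression into $A+A$. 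Summing these contributions over $r\in B'+B'$ and choosing $m$ (equivalently $w$ and $k$) to balance the $\exp$-loss in the $\zmstar$ bound against the factor $\phi(m)/m$ and the transference loss should yield exactly the density \eqref{density}, with the exponent $(\log(1/\delta_0))^{2/3}(\log\log(1/\delta_0))^{1/3}$ emerging from that optimization.

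I expect the main obstacle to lie in Steps 2--3. The transference machinery natively delivers \emph{averaged} statements, whereas $|A+A|$ is an $\ell^0$ (support) quantity, so the truncation step must be executed carefully, and it is precisely there that the sparsity of the primes is paid for; moreover the combinatorial $\zmstar$ bound must be inserted into the transference \emph{without} the two losses compounding multiplicatively, which is what forces the specific choice of $m$ and the specific shape of the exponent. A secondary but genuine difficulty is checking that the $W$-trick, Siegel--Walfisz equidistribution mod $m$, the restriction estimate, and the hypotheses of the dense model theorem are all simultaneously compatible with this choice of $m$; this is the bookkeeping that remains once the three ingredients are in place.
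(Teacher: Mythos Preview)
Your architecture in Step~3---partition $A$ into residue classes modulo $m=\prod_{p\le W}p$, run a two-set transference argument inside each pair of good residues to show that $A_n^{(b_1)}+A_n^{(b_2)}$ fills a positive fraction of its progression, and then combine with the $\zmstar$ sumset input---is exactly the paper's. The paper does not invoke the dense model theorem; the lighter Fourier decomposition of Green (Lemma~\ref{decomposition}) already suffices to prove that for any two good residues the sumset occupies a $(\tfrac{\delta_{b_1}+\delta_{b_2}}{2}-\epsilon)$-fraction of the relevant progression (Proposition~\ref{sumset1} via Lemma~\ref{convolution}). Your Step~1 pigeonhole to a \emph{single} residue $b\bmod W$ should be dropped: as the paper stresses, for sumsets one cannot restrict to one class, and in fact the $W$-trick modulus and the combinatorial modulus $m$ coincide here.

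The genuine gap is in how you merge the two ingredients. If you use the $\zmstar$ sumset bound as a black box you get $|B'+B'|\ge C_1\delta_0\,e^{-C_2(\cdots)}m$, but the per-residue transference only delivers a proportion $\gg\delta_0$ (not $\gg 1$) of each progression; summing over $r\in B'+B'$ then yields $|A+A|\gg\delta_0^{\,2}e^{-C_2(\cdots)}n$, a full factor of $\delta_0$ short of \eqref{density}, and since $\log(1/\delta_0)\gg(\log(1/\delta_0))^{2/3}(\log\log(1/\delta_0))^{1/3}$ this factor is \emph{not} absorbed by the exponential. The paper closes this gap by refusing to use its Theorem~\ref{ZNstar} as a black box: instead it feeds the underlying $k$th-moment estimate on the representation function $r_{B'}(x)$ (Proposition~\ref{representations}) directly into H\"older applied to the weighted sum $\sum_x r_{B'}(x)\Delta_x$, where $\Delta_x=\max_{b+b'=x}\tfrac12(\delta_b+\delta_{b'})$, and then optimizes the H\"older exponent $k\approx(\log(1/\delta_0)/\log\log(1/\delta_0))^{1/3}$. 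It is this optimization over $k$---not any choice of $m$, nor any balancing against $\varphi(m)/m$ as you suggest---that produces the exponent in \eqref{density}; once $W$ is large enough for pseudorandomness, the bound is independent of $m$.
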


\begin{remark} For $\delta_0=1$, a modification of our argument can be used to show that $A+A$ has upper density 1/2 (in particular, one can use the Chinese remainder theorem to prove a corresponding version of Theorem \ref{ZNstar} below).  For sufficiently small values of $\delta_0$,  the constant $C_1$ can be absorbed into the exponential term.  For larger values of $\delta_0$ we can use the boundedness of the argument of the exponential near $\delta_0=1$ to rewrite the density as $C_1\delta_0$ for a new value of $C_1$.
\end{remark}

While we do not believe that this bound is best possible, the following example shows that it is not possible to extend Theorem \ref{sumsetsinP} to the the analogous conclusion of Theorem \ref{sumsets-size}.

Let $\varphi$ denote the Euler totient function, that is, for an integer $n$, $\varphi(n)$ is the number of integers less than $n$ which are relatively prime to $n$.  Let $m$ be the product of the first $t$ primes and define $$A:=\{p\in\mathcal{P}:p\equiv 1(\text{mod } m)\}.$$  Let $A_n$ denote the set of elements of $A$ which are less than or equal to $n$.  The prime number theorem for arithmetic progressions implies that $$|A_n|=\frac{n}{\varphi(m)\log n}+O\big(\frac{n}{\log^2 n}\big).$$  Hence, if $n$ is sufficiently large, we have $$|A_n|\geq\frac{n}{2\varphi(m)\log n}.$$  It follows by the prime number theorem that the relative density of $A$ in the set of primes is at least $\delta:=1/2\varphi(m)$.  On the other hand, the definition of $A$ implies that $$A_n+A_n\subset\{s\in\mathbb{N}:s\equiv 2 (\text{mod } m)\}.$$  Using estimates of Chebyshev and Mertens (for example, see Theorem 8.2 and Theorem 8.8 in \cite{Nathanson}) we see that $m\sim \varphi(m)\log\log\varphi(m),$ and hence $$|A_n+A_n|\leq \frac{2n}{m}\sim \frac{\delta}{\log\log (1/\delta)}n.$$

We also remark that it is possible to replace the bound in (\ref{density}) with the weaker bound of $\delta^2$ using a much simpler argument which uses Cauchy-Schwarz and the Brun sieve.

One important difference between proving Theorem \ref{sumsets-size} and Theorem \ref{sumsetsinP} is that while $S$ is defined to be a random set in Theorem \ref{sumsets-size}, the set of primes is not randomly distributed.  For example, there is only one prime which is divisible by $2$, and if $x\neq 2$ is prime then the probability that $x+1$ is prime is zero.  The strategy employed by Green and Green-Tao to handle this difficulty is to consider the primes modulo $m$ where $m$ is the product of small primes.  They then pick one residue class where $A\subset\mathcal{P}$ has large density and find an arithmetic progression contained in that residue class.  

In order to bound the density of $A+A$ we are not able to restrict the arguments to one residue class.  To prove Theorem \ref{sumsetsinP} we will need a way to consider all residue classes for which $A$ has large relative density.  The relevant residue classes are contained in the multiplicative subgroup of the integers modulo $N$, which we will denote by $\zz^{\ast}$. The result that we need is contained in the proof of the following theorem.

\begin{theorem}\label{ZNstar}
Let $0<\alpha < 1$.  Assume that $m\in\mathbb{Z}^+$ is sufficiently large depending on $\alpha$.  If $B\subset \mathbb{N}$ satisfies $|B|\geq \alpha \varphi(m)$ then there are absolute constants $C_1$ and $C_2$ such that
$$|B+B|\geq C_1\alpha e^{-C_2(\log(1/\alpha))^{2/3}(\log\log(1/\alpha))^{1/3}}m.$$ 
\end{theorem}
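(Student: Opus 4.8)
The plan is to reduce the problem to a statement about sumsets in the multiplicative group $\mathbb{Z}_m^*$ and to quantify it via a Fourier/character argument. First I would reduce to the case where $B$ is contained in a single residue system modulo $m$: writing each element of $B$ as $qm + r$ with $0 \le r < m$, the sumset $B+B$ projects onto $B' + B' \pmod m$ where $B' \subset \mathbb{Z}/m\mathbb{Z}$ is the set of residues hit by $B$; since $|B| \ge \alpha\varphi(m)$ forces $|B'| \ge \alpha\varphi(m)$ unless many elements of $B$ share a residue (in which case $B+B$ is already large by a different, easier count), we may assume $B \subset \{0,1,\dots,m-1\}$ with $|B| \ge \alpha\varphi(m)$, so $B$ is a large subset of $\mathbb{Z}/m\mathbb{Z}$. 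The next step is to pass to a subset of $\mathbb{Z}_m^*$: most of the mass of $B$ lies in residue classes coprime to $m$ (the non-coprime residues number $m - \varphi(m)$, which by Mertens is a $o(1)$ fraction of $m$ for $m$ the product of the first $t$ primes, but to keep $m$ general one controls $m - \varphi(m) = m\sum_{p\mid m} \cdots$ via $\varphi(m)/m \ge c/\log\log m$ — here the hypothesis ``$m$ sufficiently large depending on $\alpha$'' is used), so after discarding a negligible part we have $B'' \subset \mathbb{Z}_m^*$ with $|B''| \ge \tfrac{\alpha}{2}\varphi(m)$, and it suffices to bound $|B''+B''|$ from below in $\mathbb{Z}/m\mathbb{Z}$.

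The heart of the matter is then the following: a subset of the multiplicative group $\mathbb{Z}_m^*$ of relative density $\ge \alpha/2$ has a sumset (inside the additive group $\mathbb{Z}/m\mathbb{Z}$) of size $\ge C_1\alpha e^{-C_2(\log(1/\alpha))^{2/3}(\log\log(1/\alpha))^{1/3}} m$. To prove this I would estimate the additive energy or, more directly, apply Cauchy–Schwarz to the representation function $r(x) = \#\{(b_1,b_2)\in B''\times B'' : b_1+b_2 = x\}$, giving $|B''+B''| \ge |B''|^4 / \sum_x r(x)^2 = |B''|^4/E(B'')$, and then bound the additive energy $E(B'')$ using the Fourier transform: $E(B'') = \sum_{\xi} |\widehat{1_{B''}}(\xi)|^4$. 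The multiplicative structure enters because $1_{B''}$ is supported on $\mathbb{Z}_m^*$, so one can exploit the action of $\mathbb{Z}_m^*$ on the Fourier side (multiplicative characters diagonalize, or: the level sets of $|\widehat{1_{B''}}|$ are unions of cosets of subgroups) to show the large Fourier coefficients are few and structured. The savings of the form $e^{-C(\log 1/\alpha)^{2/3}(\log\log 1/\alpha)^{1/3}}$ strongly suggests an iterative / density-increment or a Chang-type spectral argument: one shows the spectrum $\{\xi : |\widehat{1_{B''}}(\xi)| \ge \rho\varphi(m)\}$ has small additive dimension, controls it by a Freiman/Chang lemma, and iterates; the exponent $2/3$ is characteristic of optimizing a two-parameter bound (one parameter being the spectral threshold $\rho$, the other the number of iteration steps), as in work on Roth's theorem and on the Erdős–Ginzburg–Ziv / coset problems.

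The main obstacle I anticipate is precisely getting the energy bound with the stated sub-exponential dependence on $\alpha$ rather than a crude polynomial bound like $\alpha^2 m$ (which the authors note is achievable by a simpler Cauchy–Schwarz plus Brun sieve argument). This requires genuinely using that $B''$ sits inside the \emph{multiplicative} group — a set of density $\alpha/2$ in an arbitrary abelian group need not have this behavior — so the proof must leverage the interaction between the multiplicative and additive structures on $\mathbb{Z}/m\mathbb{Z}$, likely via Gauss sums (bounding $\widehat{1_{\mathbb{Z}_m^*}}$ pointwise, which are Ramanujan sums $c_m(\xi)$ and are well understood) combined with a Chang-type argument applied to a relatively dense subset of this structured set. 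A secondary technical point is handling $m$ that is not squarefree or not a primorial; controlling $\varphi(m)/m$, the Ramanujan sums, and the coprimality reduction uniformly will require the ``$m$ large depending on $\alpha$'' hypothesis and some care with multiplicativity, but this should be routine compared to the spectral estimate.
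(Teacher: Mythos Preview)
Your proposal has a genuine gap at the central step. You commit to Cauchy--Schwarz on the representation function, i.e.\ to bounding the \emph{second} moment $E(B'')=\sum_x r(x)^2$, and then hope that a Chang-type spectral/density-increment argument will squeeze out the loss $e^{-C(\log 1/\alpha)^{2/3}(\log\log 1/\alpha)^{1/3}}$. But the second moment is simply too crude here: plugging the paper's own moment bound (Proposition~\ref{representations} with $k=2$) into $|B+B|\ge |B|^4/E(B)$ gives only $|B+B|\gg \alpha^4 m$, and the authors remark that a Cauchy--Schwarz/Brun-sieve argument yields at best a $\delta^2$-type bound. Your speculation that multiplicative characters plus Chang's lemma will upgrade the energy bound is not a proof sketch but a hope, and the shape of the exponent does \emph{not} in fact come from an iteration.

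What the paper actually does is pass to the $k$th moment and use H\"older: $|B|^2=\sum_x r_B(x)\le(\sum_x r_B(x)^k)^{1/k}|B+B|^{(k-1)/k}$. The core of the argument is Proposition~\ref{representations}, a bound $\sum_{x\in\mathbb{Z}_m} r_B(x)^k \le \alpha^{-2}e^{Ck^3\log k}\,|B|^k\varphi(m)^k/m^{k-1}$, proved not by Fourier analysis but by a direct combinatorial/CRT computation: one replaces $r_B$ by the larger $R(x)=|\{(b,r)\in B\times\mathbb{Z}_m^*:b+r=x\}|$, expands the $k$th power, and for each prime $p\mid m$ counts how many residues mod $p$ a $k$-tuple $(b_1,\dots,b_k)$ occupies. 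A key lemma controls, for a typical $k$-tuple, the sum $\sum_{p} 1/p$ over primes where the tuple hits fewer than $k$ residues. The exponent $2/3$ then arises purely from optimizing H\"older: taking $k\approx(\log(1/\alpha)/\log\log(1/\alpha))^{1/3}$ balances the $\alpha^{O(1/k)}$ loss against the $e^{Ck^3\log k}$ constant. There is no density increment and no appeal to the large spectrum. The reduction from general $m$ to squarefree $m$ is handled at the end by slicing into intervals of length $\prod_{p\mid m}p$, which is different from (and simpler than) the coprimality reduction you sketch.
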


It is not a coincidence that the conclusions of Theorems \ref{sumsetsinP} and \ref{ZNstar} contain the same exponential factor.  In fact, it will be evident in the proof of Theorem \ref{sumsetsinP} that this factor comes about entirely from the structure of $\mathbb{Z}_m^\ast$ for a suitable modulus $m$.  Furthermore, the following example, in conjunction with Freiman's theorem (see, for example, Theorem 5.33 in \cite{TV2006}), suggests that the $1/\log\log(1/\delta)$ factor obtained in the previous construction may in fact be sharp:

Suppose $m=p_1\cdots p_s$, where $p_1<\cdots<p_s$ are the first $s$ primes.  Then $\mathbb{Z}_m\cong\mathbb{Z}_{p_1}\times\cdots\times\mathbb{Z}_{p_s}$.  Let $B\subset\mathbb{Z}_m$ be given in this representation by
$$B:=\{1\}\times \cdots\times\{1\}\times(\mathbb{Z}_{p_{t+1}}\setminus\{0\})\times \cdots\times(\mathbb{Z}_{p_s}\setminus\{0\}).$$
Here $B$ is a low dimensional generalized arithmetic progression.  Then, using the notation of Theorem \ref{ZNstar}, we can directly compute
$$\alpha = 1/\varphi(p_1\dots p_t)$$
from the fact that $\varphi(p)=p-1$ for a prime $p$ and the fact that $\varphi$ is multiplicative.  In addition, computing $B+B$ and using the same estimates as in the previous example, we obtain
$$|B+B| = \frac{m}{p_1\dots p_t} \sim \frac{\alpha}{\log\log(1/\alpha)} m.$$


\section{Preliminaries and an outline of the argument}


Throughout this paper $A$ will be a subset of the primes, $A_n$ will be a subset of the primes which are less than or equal to $n$ and $A_n^{(b)}$ will be those elements of $A_n$ which are congruent to $b$ modulo $m$, where $m$ is the product of the primes less than or equal to some sufficiently large parameter $W$.  We use $|A|$ to denote the cardinality of the set $A$ and define $A+A:=\{a+a':a,a'\in A\}$.  We write $C$, $C_1$, or $C_2$ to denote an absolute constant, although the exact value of any of these may differ between any two different expressions.  For real-valued functions $f$ and $g$, we write $f\ll g$ to mean $|f|\leq C |g|$.  As previously noted, we write $\zz$ to denote the cyclic group $\mathbb{Z}/N\mathbb{Z}$ and $\zz^*:=\{x\in\zz:(x,N)=1\}$ to denote the multiplicative subgroup of integers modulo $N$. 

If $f:\zz\rightarrow\cc$ then we define the expectation of $f$ to be 
$$\ee(f):=\frac{1}{N}\sum_{x\in\zz}f(x).$$  
We define the normalized Fourier transform 
$$\widehat{f}(\xi):=\frac{1}{N}\sum_{x\in\zz}f(x)e(-x\xi/N)$$ 
where $e(\alpha):=\exp(2\pi i\alpha)$.  For two functions $f,g:\zz\rightarrow\cc$ we define the convolution 
$$f*g(x):=\sum_{y\in\zz}f(y)g(x-y).$$  
We also define the $L^p$ norm $$\|f\|_p:=\big(\sum_{x\in\zz}|f(x)|^p\big)^{1/p}$$ 
and the $L^{\infty}$ norm 
$$\|f\|_{\infty}:=\sup_{x\in\zz}|f(x)|.$$ 
We will use Plancherel's identity which says that 
$$\sum_{\xi\in\mathbb{Z}_N}\widehat{f}(\xi)\;\overline{\widehat{g}(\xi)}=N^{-1}\sum_{x\in\mathbb{Z}_N} f(x)\overline{g(x)},$$ 
an identity for convolution 
$$\widehat{f\ast g}(\xi)=N\widehat{f}(\xi)\widehat{g}(\xi)$$ 
and the Fourier inversion formula 
$$f(x)=\sum_{\xi\in\zz}\widehat{f}(\xi)  e(x\xi/N).$$
We will say that a function $f:\zz\rightarrow\mathbb{R}_{\geq 0}$ is \emph{pseudorandom} if $$\|\widehat{f}(\xi)-1_{\xi=0}\|_\infty\leq \eta$$ for some $0<\eta\leq1$.

\bigskip

An outline of our argument is as follows: In Section 3, we begin by partitioning $A$ into residue classes modulo $m$, where $m$ is the product of small primes.  We then use techniques introduced in \cite{GRE2005} to embed each residue class on which $A$ is concentrated into $\mathbb{Z}_N$ for $N\sim n/m$.  In this setting, we are able to utilize the concept of pseudorandomness to decompose a modified characteristic function of $A$ on each partition (simultaneously) into a bounded part and a linearly uniform part.  Modifying the arguments used to prove Theorem \ref{sumsets-size}, we show that the sumset of the images of any two congruence classes of $A$  has comparably large density in $\mathbb{Z}_N$.

In Section 4, we develop a moment estimate needed to prove Theorem \ref{ZNstar}.  In particular, in Proposition \ref{representations}, we prove a $k$th moment estimate of the representation function which bounds the number of ways to write an element of $\mathbb{Z}_m$ as the sum of two elements in $B$.  An application of H\"older's inequality allows us to use the $k$th moment estimate to prove Theorem \ref{ZNstar}.

In Section 5, we combine the results of Sections 3 and 4 with an application of H\"older's inequality to complete the proof of a finite version of Theorem \ref{sumsetsinP}.


\section{Sumsets and uniformity of the primes in residue classes}


The main goal of this section will be to prove Proposition \ref{sumset1} below.  Before stating Proposition \ref{sumset1} we will state a finite version of Theorem \ref{sumsetsinP} which will allow us to introduce necessary notation.

Let $0<\delta_0 < 1$.  Let $A$ be a subset of the primes with positive relative density $\delta_0$.  This means that $$\limsup_{n\rightarrow\infty}\frac{|A\cap\mathcal{P}_n|}{|\mathcal{P}_n|}=\delta_0,$$ and hence there exist infinitely many $n$ so that $\frac{|A\cap\mathcal{P}_n|}{|\mathcal{P}_n|}\geq \delta_0/2$.  Theorem \ref{sumsetsinP} will then follow from a finite version, where $\delta:=\delta_0/2$:

\begin{theorem}\label{sumsetsinPN}
Let $A_n\subset \mathcal{P}_n$ satisfy $|A_n|\geq \delta |\mathcal{P}_n|$.  Then there exist absolute constants $C_1$ and $C_2$ such that if $n\geq n_0(\delta)$ then
$$|A_n+A_n| \geq C_1\delta e^{-C_2(\log(1/\delta))^{2/3}(\log\log(1/\delta))^{1/3}} n.$$
\end{theorem}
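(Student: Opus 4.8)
The plan is to derive Theorem \ref{sumsetsinPN} by assembling Proposition \ref{sumset1} (the uniformity statement of Section 3) and Theorem \ref{ZNstar} (the $\Z_m^*$ statement of Section 4) through a layer-cake argument. We may assume $\delta$ is below a suitable absolute constant $\delta_*$: if the actual relative density of $A_n$ exceeds $\delta_*$, the claimed bound follows by running the argument with $\delta_*$ in place of $\delta$, since $f(\delta)\le f(\delta_*)$ where $f(\alpha):=\alpha\,e^{-C_2(\log(1/\alpha))^{2/3}(\log\log(1/\alpha))^{1/3}}$ is increasing. We take $W$ large, so that $m:=\prod_{p\le W}p$ exceeds any threshold needed below and the pseudorandom majorants for the primes in residue classes modulo $m$ have uniformity parameter as small as Proposition \ref{sumset1} requires; in particular $m\to\infty$ and $\varphi(m)/m\to 0$.

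First I would split $A_n$ over residue classes modulo $m$. For $(b,m)=1$ write $A_n^{(b)}=\{a\in A_n: a\equiv b \pmod m\}$ and let $\delta_b=|A_n^{(b)}|/|\mathcal{P}_n^{(b)}|\in[0,1]$. Since all but $\pi(W)$ elements of $A_n$ lie in classes coprime to $m$, the prime number theorem for arithmetic progressions gives $\sum_{(b,m)=1}\delta_b\ge(1-o_n(1))\,\delta\,\varphi(m)$. For $b_1,b_2\in\Z_m^*$, rescaling the two progressions to $\Z_N$ with $N\sim n/m$ and invoking the pseudorandom majorants, Proposition \ref{sumset1} shows that $A_n^{(b_1)}+A_n^{(b_2)}$ — which lies in the single residue class $b_1+b_2$ modulo $m$ — contains at least $(1-o_n(1))\min(\delta_{b_1},\delta_{b_2})\cdot(n/m)$ integers. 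Since distinct residue classes modulo $m$ are disjoint, choosing for each $c\in\Z_m$ the best representation $c=b_1+b_2$ with $b_1,b_2\in\Z_m^*$ gives
\[
|A_n+A_n|\ \ge\ (1-o_n(1))\,\frac{n}{m}\sum_{c\in\Z_m}\mu(c),\qquad
\mu(c):=\max_{\substack{b_1+b_2=c\\ b_1,b_2\in\Z_m^*}}\min(\delta_{b_1},\delta_{b_2}).
\]

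The crux is to bound $\sum_c\mu(c)$ from below, and here Theorem \ref{ZNstar} enters. For $\lambda\in(0,1)$ put $B_\lambda=\{b\in\Z_m^*:\delta_b>\lambda\}$; since $\mu(c)>\lambda$ exactly when $c\in B_\lambda+B_\lambda$, the layer-cake identity gives $\sum_c\mu(c)=\int_0^1|B_\lambda+B_\lambda|\,d\lambda$. Applying Theorem \ref{ZNstar} to $B_\lambda$ with $\alpha=|B_\lambda|/\varphi(m)$ yields $|B_\lambda+B_\lambda|\ge C_1\,f(|B_\lambda|/\varphi(m))\,m$; writing $h(\lambda):=|B_\lambda|/\varphi(m)$, a non-increasing $[0,1]$-valued function with $\int_0^1 h=\varphi(m)^{-1}\sum_b\delta_b\ge(1-o_n(1))\delta$, the convexity of $f$ near $0$ (and $f(0)=0$) combined with Jensen's inequality gives $\int_0^1 f(h(\lambda))\,d\lambda\ge(1-o_n(1))f(\delta)$. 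Hence $\sum_c\mu(c)\ge(1-o_n(1))\,C_1\,m\,f(\delta)$, and the previous display yields $|A_n+A_n|\ge(1-o_n(1))\,C_1\,n\,f(\delta)$; absorbing the error for $n\ge n_0(\delta)$ and renaming $C_1$ gives the asserted bound.

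I expect the main obstacles to be technical rather than conceptual. Theorem \ref{ZNstar} needs $m$ large in terms of $\alpha$, whereas inside the integral $\alpha=|B_\lambda|/\varphi(m)$ can be as small as $1/\varphi(m)$; for the levels where $B_\lambda$ is too small one uses the trivial bound $|B_\lambda+B_\lambda|\ge|B_\lambda|$ instead, which already dominates $C_1 f(|B_\lambda|/\varphi(m))m$ once $m$ is large (as $f(\alpha)/\alpha\le 1$ while $m/\varphi(m)\to\infty$), and the levels with $|B_\lambda|\le\varphi(m)/\log m$ contribute $o_n(\delta)$ to $\int_0^1 h$ and may be discarded; likewise the Jensen step needs $f$ extended to a convex increasing function on $(0,1)$, interpreted near $\alpha=1$ as in the Remark after Theorem \ref{sumsetsinP} so that it stays bounded, and for the levels with $h(\lambda)$ near $1$ one simply uses that $|B_\lambda+B_\lambda|$ is then $\ge c_1 m$ for an absolute $c_1$. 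Also routine are the passage from the $\Z_N$ density statement of Proposition \ref{sumset1} to an honest integer count (the usual rescaling of progressions and avoidance of wraparound) and the bookkeeping of the error terms from the prime number theorem in arithmetic progressions. Finally, it is worth stressing that the whole exponential factor comes from the single application of Theorem \ref{ZNstar} inside the integral and does not deteriorate as $W\to\infty$; this is exactly why the naive estimate $|A_n+A_n|\gtrsim\delta\,(\varphi(m)/m)\,n$, which tends to $0$, is upgraded to a genuine positive proportion of $n$.
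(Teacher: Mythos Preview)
Your approach is correct but takes a genuinely different route from the paper's. The paper fixes the single level set $G=\{b:\delta_b\ge\delta/2\}$ and, starting from Proposition~\ref{sumset1}, bounds $\sum_{x\in G+G}\Delta_x$ from below by applying H\"older's inequality directly together with Proposition~\ref{representations} (the $k$th-moment estimate for the representation function $r_G$); the optimization over $k$ is then exactly the computation that proves Theorem~\ref{ZNstar}. In other words, the paper reopens the proof of Theorem~\ref{ZNstar} rather than quoting the theorem. Your layer-cake argument instead treats Theorem~\ref{ZNstar} as a black box applied at every level $B_\lambda$, and recombines the levels via Jensen's inequality using the convexity of $f(\alpha)=\alpha\,e^{-C_2(\log(1/\alpha))^{2/3}(\log\log(1/\alpha))^{1/3}}$ near $0$. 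This is more modular (any sharpening of Theorem~\ref{ZNstar} would transfer automatically) but carries extra overhead: you must verify convexity of $f$, extend it suitably near $\alpha=1$, and handle the levels where $|B_\lambda|$ is too small for Theorem~\ref{ZNstar} to apply literally. One point to tighten: Proposition~\ref{sumset1} as stated gives the bound with an \emph{additive} error $\epsilon$ and only for pairs in $G\times G$, so your displayed inequality should read $|A_n+A_n|\ge\frac{n}{m}\sum_c(\mu(c)-\epsilon)_+$ with $\mu$ restricted to pairs having $\delta_{b_i}\ge\epsilon$; taking $\epsilon$ a small multiple of $\delta$ and starting the layer-cake integral at $\lambda=\epsilon$ recovers your conclusion, but the multiplicative ``$(1-o_n(1))$'' form you wrote is not what the proposition delivers.
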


Let $\epsilon>0$ be a small parameter and let $W$ be sufficiently large depending on $\delta$ and $\epsilon$, and satisfying $W\ll \log\log n$.  Set
$$m=\prod_{p\leq W} p.$$
We begin by partitioning $A_n$ into congruence classes modulo $m$.  More specifically, let
$$A_n^{(b)} = \left\{a\in A_n: a\equiv b\ \mod{m}\right\},$$
and
$$\mathcal{P}_n^{(b)} = \left\{p\in \mathcal{P}_n: p\equiv b\ \mod{m}\right\}.$$
Define
$$\delta_b = \frac{|A_n^{(b)}|}{|\mathcal{P}_n^{(b)}|}.$$

For those sets $A_n^{(b)}$ for which we have a large relative density in $\mathcal{P}_n$, we say that $b$ is good, and we define the set of good residue classes to be $$G=\{b\in\zmstar:\delta_b\geq\delta/2\}.$$

Combining the methods of Green \cite{GRE2005} on three term arithmetic progressions in subsets of the primes with the methods used to prove Theorem \ref{sumsets-size} we are able to show that for any pair of good residue classes $b_1$ and $b_2$ the sumset $A_n^{(b_1)}+A_n^{(b_2)}$ is dense in the progression $\{0\leq x\leq 2n: x\equiv b_1+b_2\ \mod{m}\}$.  Summing over pairs of residue classes and being careful not to count multiplicities, we have the following:

\begin{proposition}\label{sumset1}
For every $x\in G+G$, let
$$\Delta_x = \max_{(b,b')\in G\times G \atop b+b' = x}\left(\frac{\delta_b+\delta_{b'}}{2}\right)\geq \delta/2.$$
Then for every $\epsilon>0$
\begin{equation}
|A_n + A_n| \geq \sum_{x \in G+G} \left(\Delta_x-\epsilon \right) \frac{n}{m}.
\end{equation}
\end{proposition}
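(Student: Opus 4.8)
The plan is to reduce the claim to a statement about a single pair of residue classes and then sum carefully. First I would fix $x \in G+G$ and choose a pair $(b,b') \in G \times G$ with $b+b' = x$ achieving the maximum in the definition of $\Delta_x$. The main analytic input — which is exactly the content of the methods of Green \cite{GRE2005} combined with the argument behind Theorem \ref{sumsets-size}, as announced in the text preceding the proposition — is that for any two good residue classes $b, b'$, the sumset $A_n^{(b)} + A_n^{(b')}$ is dense in the arithmetic progression $\{0 \le s \le 2n : s \equiv x \ \mod{m}\}$; quantitatively, that
$$
\left| A_n^{(b)} + A_n^{(b')} \right| \ge \left( \frac{\delta_b + \delta_{b'}}{2} - \epsilon \right) \frac{n}{m} = (\Delta_x - \epsilon)\frac{n}{m}.
$$
This is the step I expect to be the main obstacle, and it is where essentially all the work (embedding each good class into $\mathbb{Z}_N$ with $N \sim n/m$, the pseudorandom decomposition into a bounded plus linearly uniform part, and the Fourier/transference argument) must be carried out; I would treat it as the technical heart of Section 3 and invoke it here.

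Granting that, I would then observe that the sets $A_n^{(b)} + A_n^{(b')}$, as $x$ ranges over $G+G$, lie in \emph{disjoint} residue classes modulo $m$: any element of $A_n^{(b)} + A_n^{(b')}$ is congruent to $b + b' = x \ \mod{m}$, and distinct $x \in G+G$ are distinct modulo $m$ since $G \subset \zmstar \subset \{0,1,\dots,m-1\}$ and $G+G$ is viewed inside $\mathbb{Z}_m$ (this is precisely the ``being careful not to count multiplicities'' remark). Hence, for each $x$, fixing an optimal pair $(b_x, b'_x)$,
$$
A_n + A_n \supseteq \bigsqcup_{x \in G+G} \left( A_n^{(b_x)} + A_n^{(b'_x)} \right),
$$
a disjoint union, so that
$$
|A_n + A_n| \ge \sum_{x \in G+G} \left| A_n^{(b_x)} + A_n^{(b'_x)} \right| \ge \sum_{x \in G+G} (\Delta_x - \epsilon)\frac{n}{m},
$$
which is the desired inequality. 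The lower bound $\Delta_x \ge \delta/2$ is immediate: if $x = b + b'$ with $b, b' \in G$ then both $\delta_b, \delta_{b'} \ge \delta/2$, so their average is at least $\delta/2$.

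One small point to handle with care is the distinction between sums taken in $\mathbb{Z}$ and residues taken in $\mathbb{Z}_m$: the elements of $A_n + A_n$ are genuine integers in $[0, 2n]$, and two such integers arising from different classes $x \ne x'$ in $G+G \subset \mathbb{Z}_m$ cannot coincide because they are incongruent modulo $m$; this is what makes the union disjoint and legitimizes summing the individual lower bounds without overcounting. No other subtlety arises, so once the per-pair density estimate from the Green-type transference argument is in hand, the proposition follows by this disjointness-and-summation argument.
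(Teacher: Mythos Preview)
Your proposal is correct and follows essentially the same approach as the paper: reduce to the per-pair estimate $|A_n^{(b_1)}+A_n^{(b_2)}|\geq (\tfrac{\delta_{b_1}+\delta_{b_2}}{2}-\epsilon)\tfrac{n}{m}$, then sum over $x\in G+G$ using disjointness of residue classes modulo $m$. The paper likewise isolates this per-pair bound (its equation (\ref{fb1fb2}), proved via the embedding into $\mathbb{Z}_N$, the pseudorandom majorant, the decomposition Lemma \ref{decomposition}, and the convolution Lemma \ref{convolution}) as the technical core and then invokes the same disjointness observation to finish.
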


\begin{remark}
As we will see in the proof of Lemma \ref{decomposition}, we will require $m$ to be a rapidly increasing function as $\delta$ and $\epsilon$ go to $0$.  This is the reason that we must gain control on $|G+G|$ in order to prove Theorem \ref{sumsetsinPN}.
\end{remark}

In the remainder of this section, we will prove Proposition \ref{sumset1}.  This requires several lemmas.  

The first lemma, which was proved by Green (\cite{GRE2005}, Lemma 6.1), allows us to consider $A_n^{(b)}$ as a subset of $\zz$ for $N\sim n/m$.  For those $b$ for which the density $\delta_b$ is large, we will then construct a collection of simultaneously pseudorandom measures which majorize modified characteristic functions of the images in $\mathbb{Z}_N$ of the sets $A_n^{(b)}$.  Working in $\zz$ we then use Fourier analytic techniques.  As our notation differs slightly from that used by Green in \cite{GRE2005}, we include a proof for completeness.  

We recall Green's modified von Mangoldt function $\lambda_{b,m,N}:\mathbb{Z}^+\to \mathbb{R}$, defined as
$$\lambda_{b,m,N}(x)=\left\{ \begin{array}{ll}
\frac{\varphi(m)}{mN}\log(mx+b) & \textrm{if $x\leq N$ and $mx+b$ is prime,}\\
0 & {\rm otherwise.}
\end{array} \right.
$$

\begin{lemma}\label{embedding}
Let $N\in(2n/m, 4n/m]$, and let $\mathcal{A}_N^{(b)} = \big(m^{-1}(A_n - b)\big)\cap \{1,\dots,N\}$ for $b\in G$.  Then
\begin{equation*}
\sum_{x\in \mathcal{A}_N^{(b)}} \lambda_{b,m,N} (x) \geq \frac{\delta_b}{16}.
\end{equation*}
\end{lemma}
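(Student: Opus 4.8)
The plan is to unwind the definition of $\lambda_{b,m,N}$ so that the sum becomes a weighted sum of $\log p$ over $p\in A_n^{(b)}$, and then to show that the cutoff at $N$ and the small primes contribute negligibly, so that the main term is pinned down by the prime number theorem for arithmetic progressions.

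First I would observe that if $x\in\mathcal{A}_N^{(b)}$ then $mx+b\in A_n$ and $mx+b\equiv b\pmod m$, so $mx+b$ is a prime belonging to $A_n^{(b)}$; in particular $mx+b\le n$, hence $x\le n/m<N$, so the first branch in the definition of $\lambda_{b,m,N}$ is in force for every $x\in\mathcal{A}_N^{(b)}$. Conversely, each $p\in A_n^{(b)}$ with $p\neq b$ gives a point $x=(p-b)/m\in\{1,\dots,N\}$ of $\mathcal{A}_N^{(b)}$ (here $(p-b)/m\ge 1$ because $p\equiv b\pmod m$ with $0\le b<m\le p$, and $(p-b)/m\le p/m\le n/m<N$; the only prime possibly not counted is $p=b$ itself). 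Therefore
$$\sum_{x\in\mathcal{A}_N^{(b)}}\lambda_{b,m,N}(x)=\frac{\varphi(m)}{mN}\sum_{\substack{p\in A_n^{(b)}\\ p\neq b}}\log p\ \geq\ \frac{\varphi(m)}{mN}\sum_{\substack{p\in A_n^{(b)}\\ \sqrt n<p\le n}}\log p\ >\ \frac{\varphi(m)\log n}{2mN}\,\bigl|\{p\in A_n^{(b)}:p>\sqrt n\}\bigr|.$$

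Next I would estimate the count on the right. Since $\pi(\sqrt n)\ll\sqrt n/\log n$, we have $|\{p\in A_n^{(b)}:p>\sqrt n\}|\ge|A_n^{(b)}|-\pi(\sqrt n)=\delta_b|\mathcal{P}_n^{(b)}|-O(\sqrt n/\log n)$. Because $b\in G\subset\zmstar$ and $m$ is a constant (depending only on $\delta$ and $\epsilon$), the prime number theorem for arithmetic progressions gives $|\mathcal{P}_n^{(b)}|=(1+o(1))\,n/(\varphi(m)\log n)$ as $n\to\infty$, uniformly in $b$; and as $\delta_b\ge\delta/2$ the error $O(\sqrt n/\log n)$ is $o(\delta_b|\mathcal{P}_n^{(b)}|)$. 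Hence $|\{p\in A_n^{(b)}:p>\sqrt n\}|\ge(1-o(1))\,\delta_b\,n/(\varphi(m)\log n)$ for $n$ large. Substituting this into the display above and using $N\le 4n/m$, the left-hand side is at least
$$\frac{\varphi(m)\log n}{2mN}\cdot(1-o(1))\,\delta_b\,\frac{n}{\varphi(m)\log n}=(1-o(1))\,\frac{\delta_b}{2}\cdot\frac{n}{mN}\ \geq\ (1-o(1))\,\frac{\delta_b}{8},$$
which exceeds $\delta_b/16$ once $n$ is large enough (in terms of $\delta$ and $\epsilon$), proving the lemma.

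The only genuine input is the prime number theorem for arithmetic progressions with bounded modulus $m$; everything else is bookkeeping, and the point to be careful about is that the inequality directions are consistent and that the discarded contributions — primes below $\sqrt n$ and the single index (if any) with $mx+b=b$ — are truly negligible next to $|A_n^{(b)}|$, which holds precisely because $m$ does not grow with $n$. A slightly sharper cutoff than $\sqrt n$ would improve the constant, but $\sqrt n$ already comfortably yields the claimed $1/16$.
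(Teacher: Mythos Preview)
Your proof is correct and follows essentially the same route as the paper's: both arguments rewrite the sum as $\frac{\varphi(m)}{mN}\sum_{p\in A_n^{(b)}}\log p$ (up to the single term $p=b$), discard the small primes below a cutoff, invoke the prime number theorem in arithmetic progressions to lower bound $|A_n^{(b)}|$ (via $|\mathcal{P}_n^{(b)}|\sim n/(\varphi(m)\log n)$), and finish with $N\le 4n/m$. The only cosmetic differences are that the paper uses the cutoff $n^{3/4}$ and performs the change of variables at the end, while you use $\sqrt n$ and unwind the definition at the start.
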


\begin{proof}
By the prime number theorem we have

\begin{eqnarray*}
\sum_{x\in A^{(b)}_n} \log x & \geq & \sum_{x\geq n^{3/4}} 1_{A_n^{(b)}}(x) \log x \nonumber\\
& \geq &
\left( \frac{\delta_b n}{\varphi(m) \log n} - n^{3/4}\right) \log (n^{3/4})\nonumber\\
& = &
\frac{3}{4} \frac{\delta_b n}{\varphi(m)} - \frac{3}{4} n^{3/4} \log n \nonumber\\
& \geq &
\frac{\delta_b n}{4\varphi(m)}
\end{eqnarray*}
for $n$ sufficiently large.  Performing a change of variables $x\to mx+b$ we obtain
$$\sum_{x\leq N \atop mx+b\ \textrm{prime}} 1_{\mathcal{A}_N^{(b)}}(x) \log(mx+b) \geq \frac{\delta_b n}{4\varphi(m)}.$$
By definition of $N$,
\[\sum_{x\in \mathcal{A}_N^{(b)}} \lambda_{b,m,N}(x) \geq \delta_b n/4mN \geq \delta_b/16. \qedhere\]
\end{proof}

Taking $\mathcal{A}_N^{(b)}\subset\zz$ we notice that $|\mathcal{A}_N^{(b)}|=|A_n^{(b)}|$ and \begin{equation}\label{equationzn}|A_n^{(b_1)}+A_n^{(b_2)}|\geq |\mathcal{A}_N^{(b_1)}+\mathcal{A}_N^{(b_2)}|\end{equation} for any $b_1,b_2\in G$.

\bigskip

We now define $$f^{(b)}(x) := N1_{\mathcal{A}_N^{(b)}}(x) \lambda_{b,m,N}(x),$$ and $$\nu^{(b)}(x) := N\lambda_{b,m,N}(x).$$  From the above lemma, we note that $\mathbb{E}f^{(b)} \geq \delta_b /16$.  Such functions were first defined by Green in \cite{GRE2005} where a three term arithmetic progression was located in a fixed residue class $b$ where $A_n^{(b)}$ had large relative density.  The function $f$ is defined so that it has large expectation and $\nu$ is \emph{pseudorandom}.  We require the following two lemmas of Green (\cite{GRE2005}, Lemma 6.2 and Lemma 6.6)  which express the pseudorandom properties of primes:

\begin{lemma}\label{pseudorandom}
For $N$ and $W$ sufficiently large there is some $D>0$ such that
\[\widehat{\nu^{(b)}} (0)  \leq 1+ O((\log N)^{-D})\] and
\[\sup_{\xi\neq 0} |\widehat{\nu^{(b)}} (\xi)|  \leq 2\log\log W /W.\]
\end{lemma}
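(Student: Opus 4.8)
The statement to be proved is Lemma~\ref{pseudorandom}, the pseudorandomness of the measures $\nu^{(b)}$. My plan is to follow Green's approach from \cite{GRE2005} (Lemmas 6.2 and 6.6), adapting the normalization to the conventions of this paper.

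First I would unwind the definitions: since $\nu^{(b)}(x) = N\lambda_{b,m,N}(x)$, we have $\widehat{\nu^{(b)}}(\xi) = \frac{1}{N}\sum_{x\in\zz}\nu^{(b)}(x)e(-x\xi/N) = \sum_{x\leq N,\ mx+b\text{ prime}}\frac{\varphi(m)}{mN}\log(mx+b)\,e(-x\xi/N)$. For $\xi = 0$ this is exactly $\frac{\varphi(m)}{mN}\sum_{x\leq N,\ mx+b\text{ prime}}\log(mx+b)$, which by the prime number theorem in arithmetic progressions (or a sieve upper bound such as Brun--Titchmarsh for the harder direction, but here only an upper bound plus the lower estimate from the PNT) equals $1 + O((\log N)^{-D})$ for some $D>0$ — this is essentially a restatement that the primes congruent to $b$ mod $m$ up to $mN+b$ have the expected weighted count, using that $W$ is large enough that the error terms are controlled, e.g. via a quantitative PNT for APs (Siegel--Walfisz, since $m$ grows slowly in $N$).

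For the second bound, on $\sup_{\xi\neq 0}|\widehat{\nu^{(b)}}(\xi)|$, the plan is to write $\xi/N$ and approximate it by a rational $a/q$ with $q$ small using Dirichlet's theorem, then split into the major-arc and minor-arc cases. On the minor arcs one invokes the classical Vinogradov-type exponential sum estimate over primes (e.g. via Vaughan's identity), which after accounting for the $\varphi(m)/(mN)$ normalization gives a saving of a power of $\log N$; since $W$ (hence $m$) can be taken large, this is more than enough. On the major arcs, after separating the contribution of the singular-series-type main term, the point is that $mx+b$ ranges only over a fixed residue class coprime to $m$, so the main term, which would be of size $\sim \mu(q)/\varphi(q)$ times a Ramanujan-sum factor, is killed down to size $O(\log\log W / W)$ precisely because every prime $q\leq W$ divides $m$, forcing the relevant Gauss/Ramanujan sums to vanish or the modulus to be large; the factor $\log\log W$ enters through Mertens' theorem ($\sum_{p\leq W}1/p \sim \log\log W$) exactly as in Green's original computation. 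Assembling the major- and minor-arc bounds yields $\sup_{\xi\neq0}|\widehat{\nu^{(b)}}(\xi)|\leq 2\log\log W/W$ for $N$ and $W$ large.

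The main obstacle is the major-arc analysis: one must track carefully how the restriction to the single residue class $b\bmod m$ (with $m = \prod_{p\leq W}p$) forces cancellation in the local factors, and verify that the resulting bound is uniform in $b\in G$ and genuinely of the shape $\log\log W/W$ rather than merely $o(1)$. Since this is verbatim (up to normalization) Green's Lemmas 6.2 and 6.6, I would cite \cite{GRE2005} for the details and only indicate the translation of constants; the routine Fourier-normalization bookkeeping (the factor of $N$ in $\nu^{(b)} = N\lambda_{b,m,N}$ versus Green's normalization) is the only thing that needs checking, and it is straightforward.
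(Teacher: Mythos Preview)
Your proposal is correct and matches the paper's approach exactly: the paper does not reprove this lemma at all but simply quotes it as Lemmas~6.2 and~6.6 of Green~\cite{GRE2005}, which is precisely what you propose to do (with a helpful sketch of Green's argument). The only thing to verify is the normalization, which you have already noted.
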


\begin{lemma}\label{discmaj}
Let $s>2$.  Then there is a constant $C(s)$ such that

\begin{equation}
\|\widehat{f^{(b)}}\|_s \leq C(s).
\end{equation}
\end{lemma}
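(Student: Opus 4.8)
The goal is to prove Lemma~\ref{discmaj}, the bound $\|\widehat{f^{(b)}}\|_s \leq C(s)$ for $s > 2$, uniformly over good residue classes $b$. The plan is to use the majorization $0 \leq f^{(b)} \leq \nu^{(b)}$ pointwise together with the restriction-type estimate for the primes that underlies Green's work. First I would observe that since $f^{(b)}(x) = N 1_{\mathcal{A}_N^{(b)}}(x)\lambda_{b,m,N}(x) \leq N\lambda_{b,m,N}(x) = \nu^{(b)}(x)$, we have $|\widehat{f^{(b)}}(\xi)| \leq \widehat{\nu^{(b)}}(0) \leq 1 + O((\log N)^{-D})$ for all $\xi$ by Lemma~\ref{pseudorandom}; in particular the Fourier coefficients are uniformly bounded. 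Hence for any $s > 2$ it suffices to control $\|\widehat{f^{(b)}}\|_2$ and interpolate, but Plancherel only gives $\|\widehat{f^{(b)}}\|_2^2 = N^{-1}\sum_x f^{(b)}(x)^2$, which is of size roughly $\log N$ (because of the $\log(mx+b)$ weight), so the trivial bound is not good enough. The point is that the exponent is strictly bigger than $2$, and this is exactly the regime where a restriction estimate saves the day.

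The heart of the argument is the Green--Tao restriction theorem for the primes, adapted to a single residue class as in \cite{GRE2005}. The statement one wants is: for every $s > 2$ there is $C(s)$ such that for any function $g$ with $0 \leq g \leq \nu^{(b)}$ one has $\|\widehat{g}\|_s \leq C(s) (\mathbb{E} g)^{?}$ — in fact the clean form is $\|\widehat{g}\|_s \leq C(s)$ whenever $g \leq \nu^{(b)}$ and $\mathbb{E}\,g \ll 1$, which applies here since $\mathbb{E} f^{(b)} \leq \mathbb{E}\nu^{(b)} = \widehat{\nu^{(b)}}(0) \leq 1 + o(1)$. So the plan is: (i) quote from \cite{GRE2005} (Lemma 6.6) the restriction estimate in precisely the normalization used there, noting that our $\nu^{(b)}$ and $f^{(b)}$ are the rescalings by $N$ of Green's objects; (ii) check that the pseudorandomness hypotheses needed for that restriction estimate are supplied by Lemma~\ref{pseudorandom} above (the Fourier bound $\sup_{\xi \neq 0}|\widehat{\nu^{(b)}}(\xi)| \leq 2\log\log W / W$ and $\widehat{\nu^{(b)}}(0) = 1 + o(1)$, which is the form of pseudorandomness driving the $L^p$ restriction machinery via the Stein--Tomas / Bourgain argument); (iii) conclude $\|\widehat{f^{(b)}}\|_s \leq C(s)$ with $C(s)$ depending only on $s$, uniformly in $b$, since all the estimates in Lemma~\ref{pseudorandom} are uniform in $b \in G$.

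Concretely, the argument inside (i)--(iii) runs as follows. One first proves the endpoint-type bound by the Tomas--Stein method: for $g \leq \nu^{(b)}$, expand $\|\widehat{g}\|_{2k}^{2k}$ for an even integer $2k$ as a count of additive $2k$-tuples weighted by $\nu^{(b)}$, and use the Fourier decay of $\nu^{(b)}$ (splitting the zero and nonzero frequencies) to bound this by $O_k(1)$ plus an error that is negligible once $W \to \infty$; this is exactly the computation in \cite{GRE2005} and \cite{GT2008}. Then for general real $s > 2$, write $s$ between consecutive even integers and interpolate using the uniform $L^\infty$ bound $\|\widehat{g}\|_\infty \leq 1 + o(1)$ from Lemma~\ref{pseudorandom}. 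The main obstacle — or rather the one place requiring care rather than invocation — is the bookkeeping of normalizations: Green works with $\lambda_{b,m,N}$ directly whereas we have inserted a factor of $N$ into both $f^{(b)}$ and $\nu^{(b)}$, and one must make sure the exponents of $N$ in the definition of $\widehat{\phantom{f}}$ (which carries a $1/N$) are tracked so that the final bound is genuinely $O_s(1)$ and not, say, $O_s(N^{c})$; this is routine but is the step most likely to hide a slip. Everything else is a direct citation of Green's Lemma~6.6, and since the input estimates in Lemma~\ref{pseudorandom} hold uniformly over $b$, so does the conclusion.
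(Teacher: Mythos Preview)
Your proposal is correct and matches the paper's approach: the paper does not prove Lemma~\ref{discmaj} at all but simply quotes it as Green's Lemma~6.6 from \cite{GRE2005}, so your sketch of the restriction/Tomas--Stein argument (majorize by $\nu^{(b)}$, use the Fourier decay from Lemma~\ref{pseudorandom}, handle even-integer exponents first and interpolate) is exactly the content being cited, just spelled out in more detail than the paper itself provides.
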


In order to prove Proposition \ref{sumset1} we will show that if $b_1, b_2\in G$ then \begin{equation}\label{fb1fb2}|\{x\in\zz:(f^{(b_1)}\ast f^{(b_2)})(x)>0\}|\geq\big(\frac{\delta_{b_1}+\delta_{b_2}}{2}-\epsilon\big)N.\end{equation}  Since $f^{(b_1)}\ast f^{(b_2)}$ is supported on $\mathcal{A}_N^{(b_1)}+\mathcal{A}_N^{(b_2)}$, assuming (\ref{fb1fb2}) implies that the size of this sumset must be large.  In this case, by (\ref{equationzn}), we must have
\begin{eqnarray}
|A_n^{(b_1)}+A_n^{(b_2)}| & \geq & \big(\frac{\delta_{b_1}+\delta_{b_2}}{2}-\epsilon\big)N\nonumber\\
& \geq & \big(\frac{\delta_{b_1}+\delta_{b_2}}{2}-\epsilon\big)\frac{n}{m}.\nonumber
\end{eqnarray}
Noticing that $A_n^{(b_1)}+A_n^{(b_2)}$ is disjoint from $A_n^{(b_3)}+A_n^{(b_4)}$ provided $b_1+b_2 \neq b_3+b_4$ proves Proposition \ref{sumset1}.  It is therefore sufficient to prove (\ref{fb1fb2}).

Equation (\ref{fb1fb2}) follows, with modifications, from the arguments in \cite{HL2008}.  These arguments rely on a Fourier-analytic decomposition of Green \cite{GRE2005} and Green-Tao \cite{GT2008}, which as stated, appears in \cite{GT2006} [see Proposition 5.1] and is also contained in \cite{TV2006} [see Theorem 10.20].  In particular the functions $f^{(b)}$ are decomposed as $f_1^{(b)}+f_2^{(b)}$ where $f_1^{(b)}$ is bounded and $f_2^{(b)}$ is unbounded but `uniform'.

\begin{lemma}\label{decomposition}  Suppose that $f=f^{(b)}$ and $\nu=\nu^{(b)}$ are as above.  Let $s>2$ and let $\epsilon_0 > 0$.  Define $$f_1(x):=\ee(f(x+y_1-y_2):y_1,y_2\in B_0),$$  where $$B_0:=\{x:|e^{-2\pi ix\xi/N}-1|\leq \epsilon_0\ {\rm for\ all\ }\xi\in\Lambda_0\} ,\ \ \ \ \Lambda_0:=\{\xi:|\widehat{f}(\xi)|\geq\epsilon_0\}.$$  Define $f_2(x):=f(x)-f_1(x)$.  Then for every $\sigma>0$, assuming that $N$ is sufficiently large, and $W$ is sufficiently large depending on $\epsilon_0$, we have

(i) $0\leq f_1\leq 1+\sigma,$

(ii) $\ee f_1=\ee f$,

(iii) $\|\widehat{f_2}\|_{\infty}\leq\epsilon_0/16$ and $\|\widehat{f_1}\|_{\infty}\ll 1$,

(iv) $\|\widehat{f_i}\|_s\ll 1$ for $i=1,2$.

\end{lemma}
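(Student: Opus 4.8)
The plan is to verify the four properties by direct manipulation of the explicit formula for $f_1$ as a double average of translates of $f$ over the Bohr-type set $B_0$, following the decomposition of Green and Green--Tao as it appears in \cite{GT2006} and \cite{TV2006}. First I would record the elementary observation that $f_1 = f \ast \mu$, where $\mu(x) := \ee(1_{B_0}(x+y_1)1_{B_0}(y_2)\cdot \text{(suitable normalization)})$; more precisely, writing $\beta := 1_{B_0}/|B_0|$, one has $f_1 = f \ast \beta \ast \tilde\beta$ where $\tilde\beta(x) = \beta(-x)$, so that $\widehat{f_1}(\xi) = N^2 \widehat{f}(\xi)|\widehat{\beta}(\xi)|^2$. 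Since $0 \le N^2|\widehat\beta(\xi)|^2 = |\widehat{1_{B_0}}(\xi)|^2/|B_0|^2 \le 1$ with equality at $\xi = 0$, this immediately gives the nonnegativity in (i) (as $f \ge 0$), the mean-preservation (ii) (the $\xi = 0$ Fourier coefficient is unchanged), and the bound $\|\widehat{f_1}\|_\infty \le \|\widehat f\|_\infty \ll 1$ from Lemma \ref{discmaj} with, say, $s = 3$; it also gives $\|\widehat{f_2}\|_\infty \le 2\|\widehat f\|_\infty \ll 1$ and, by the same pointwise domination $|\widehat{f_i}(\xi)| \le C|\widehat f(\xi)|$ for $i = 1,2$, the $L^s$ bounds in (iv) directly from Lemma \ref{discmaj}.

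The two genuinely substantive points are the upper bound $f_1 \le 1 + \sigma$ in (i) and the uniformity estimate $\|\widehat{f_2}\|_\infty \le \epsilon_0/16$ in (iii). For the uniformity bound, I would split into the large and small frequencies relative to $\Lambda_0$. For $\xi \notin \Lambda_0$ we have $|\widehat f(\xi)| < \epsilon_0$ by definition, and $|\widehat{f_1}(\xi)| \le |\widehat f(\xi)| < \epsilon_0$, so $|\widehat{f_2}(\xi)| < 2\epsilon_0$ — this is not yet good enough, so one must instead argue that for $\xi \in \Lambda_0$ the factor $N^2|\widehat\beta(\xi)|^2$ is very close to $1$: indeed for $\xi \in \Lambda_0$ and $x \in B_0$ we have $|e(-x\xi/N) - 1| \le \epsilon_0$, hence $\widehat{1_{B_0}}(\xi) = |B_0| + O(\epsilon_0|B_0|)$, so $N^2|\widehat\beta(\xi)|^2 = 1 + O(\epsilon_0)$ and thus $|\widehat{f_2}(\xi)| = |\widehat f(\xi)|\cdot O(\epsilon_0) \le \|\widehat f\|_\infty O(\epsilon_0)$. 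Choosing the implied constants and the relation between the $O(\epsilon_0)$ here and the target $\epsilon_0/16$ (one rescales the parameter defining $B_0$), one concludes $\|\widehat{f_2}\|_\infty \le \epsilon_0/16$ on all of $\zz$. The case $\xi \notin \Lambda_0$ needs the slightly sharper statement $|\widehat{f_2}(\xi)| \le |\widehat f(\xi)| \le \epsilon_0$ together with absorbing constants, so I would be careful to state the lemma with the right numerology (this is exactly why the statement fixes $\epsilon_0/16$ rather than $\epsilon_0$).

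For the bound $f_1 \le 1 + \sigma$, which I expect to be the main obstacle, I would use the majorant $f \le \nu = \nu^{(b)}$ and the pseudorandomness of $\nu$ from Lemma \ref{pseudorandom}. One writes $f_1(x) = \ee(f(x + y_1 - y_2) : y_1, y_2 \in B_0) \le \ee(\nu(x+y_1-y_2):y_1,y_2 \in B_0)$ and expands this via Fourier inversion as $\sum_\xi \widehat\nu(\xi) e(x\xi/N)|\widehat{1_{B_0}}(\xi)/|B_0||^2$. The $\xi = 0$ term contributes $\widehat\nu(0) \le 1 + O((\log N)^{-D})$, while the contribution of $\xi \ne 0$ is bounded in absolute value by $\sup_{\xi \ne 0}|\widehat\nu(\xi)| \cdot \sum_\xi |\widehat{1_{B_0}}(\xi)|^2/|B_0|^2 = \sup_{\xi\ne 0}|\widehat\nu(\xi)| \cdot N/|B_0|$ by Plancherel. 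Now $N/|B_0|$ is bounded by a quantity depending only on $\epsilon_0$ and $|\Lambda_0|$ (a standard Bohr-set volume bound: $|B_0| \ge (\epsilon_0/C)^{|\Lambda_0|} N$, and $|\Lambda_0| \le \epsilon_0^{-s}\|\widehat f\|_s^s$ is bounded in terms of $\epsilon_0$ by Lemma \ref{discmaj}), so taking $W$ (hence the gap in Lemma \ref{pseudorandom}, where $\sup_{\xi \ne 0}|\widehat\nu(\xi)| \le 2\log\log W/W$) large enough depending on $\epsilon_0$ makes this error term $\le \sigma/2$, and taking $N$ large makes the $O((\log N)^{-D})$ term $\le \sigma/2$ as well. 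This yields $f_1 \le 1 + \sigma$ and completes the proof. The delicate point throughout is the order of quantifiers — $\sigma$ is given first, then $\epsilon_0$ is already fixed, then $N$ and $W$ are chosen large in terms of these — and matching this with the volume bound for $B_0$, which is the step where the rapid growth of $m = \prod_{p \le W} p$ referenced in the remark after Proposition \ref{sumset1} enters.
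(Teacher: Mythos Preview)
Your proposal is correct and follows essentially the same approach as the paper: the paper defers (ii)--(iv) to \cite{GT2006} and proves only (i) in detail, and your argument for (i) --- majorize by $\nu$, expand by Fourier inversion, isolate the $\xi=0$ term via Lemma~\ref{pseudorandom}, bound the rest by $(\sup_{\xi\neq 0}|\widehat\nu(\xi)|)\cdot N/|B_0|$ using Plancherel, and control $N/|B_0|$ via the Bohr-set volume bound $|B_0|\ge (c\epsilon_0)^{|\Lambda_0|}N$ together with $|\Lambda_0|\le (C(s)/\epsilon_0)^s$ from Lemma~\ref{discmaj} --- is line-for-line the paper's computation. One small point: your discussion of the constant $\epsilon_0/16$ in (iii) is slightly garbled --- for $\xi\notin\Lambda_0$ you only get $|\widehat{f_2}(\xi)|\le|\widehat f(\xi)|<\epsilon_0$, not $\epsilon_0/16$, and no ``absorbing constants'' fixes this; the resolution (as in \cite{GT2006}) is to take the spectral threshold in the definition of $\Lambda_0$ smaller than the target uniformity, but this is a cosmetic numerical issue with the lemma as stated, not a gap in your strategy.
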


\begin{proof} The proofs of (ii), (iii) and (iv) follow as in \cite{GT2006}, while we reiterate the proof of (i) here.  

In order to bound $f_1$ we begin by using Fourier inversion to show that  
\begin{eqnarray*}
0\ \leq\ f_1(x) & = & \frac{1}{|B_0|^2}\sum_{y_1,y_2\in B_0} f(x+y_1-y_2)\\
& \leq & \frac{1}{|B_0|^2}\sum_{y_1,y_2\in B_0} \nu(x+y_1-y_2)\\
& = & \frac{1}{|B_0|^2}\sum_{y_1,y_2\in B_0}\sum_{\xi\in\mathbb{Z}_N} \widehat{\nu}(\xi)e\left(\frac{\xi(x+y_1-y_2)}{N}\right)\\
& = & \sum_{\xi\in\mathbb{Z}_N}\widehat{\nu}(\xi)e\left(\xi x/N\right)\frac{1}{|B_0|^2}\left|\sum_{y\in B_0} e\left(-\xi y/N\right)\right|^2\\
& \leq & \sum_{\xi\in\mathbb{Z}_N}|\widehat{\nu}(\xi)|\frac{1}{|B_0|^2}\left|\sum_{y\in B_0} e\left(-\xi y/N\right)\right|^2.\\
\end{eqnarray*}
We continue by applying Lemma \ref{pseudorandom} and Plancherel's identity to show that is
\begin{eqnarray*}
& \leq & |\widehat{\nu}(0)|+\sum_{\xi\in\mathbb{Z}_N} \frac{2\log\log W}{W}\frac{1}{|B_0|^2}\left|\sum_{y\in B_0} e\left(-\xi y/N\right)\right|^2\\
& \leq & 1+O\left((\log N)^{-D}\right) + \frac{2\log\log W}{W}\frac{N^2}{|B_0|^2}\sum_{\xi\in\mathbb{Z}_N} \left| \widehat{1_{B_0}}(\xi)\right|^2\\
& = & 1+O\left((\log N)^{-D}\right) + \frac{2\log\log W}{W}\frac{N}{|B_0|}\\
& = & 1+ O\left(\frac{2\log\log W}{W}\frac{N}{|B_0|}\right).
\end{eqnarray*}
Using the pigeonhole principle, there is a constant $c>0$ so that
$$|B_0|\geq (c\epsilon_0)^{|\Lambda_0|} N.$$
Also,
$$\sum_{\xi\not\in \Lambda_0} |\widehat{f}(\xi)|^s + \epsilon_0^s|\Lambda_0| \leq C(s)^s$$
where $C(s)$ is given by Lemma \ref{discmaj}, so
$$|\Lambda_0|\leq (C(s)/\epsilon_0)^s.$$
Therefore
$$0\leq f_1(s) \leq 1+ O\left(\frac{\log\log W}{W(c\epsilon_0)^{\left(C(s)/\epsilon_0\right)^s}}\right).$$
The bound now follows since $W$ is sufficiently large in terms of $\epsilon_0$.
\end{proof}

\begin{remark} In the following we will see that we must take $\epsilon_0$ to be smaller than $\delta^{4}\epsilon^{6}$.  Combining this with the fact that we need the error term in the final equation of the above proof to be bounded, we see that $m$ must increase rapidly as $\delta$ and $\epsilon$ approach 0, as mentioned in the remark following Proposition \ref{sumset1}.
\end{remark}

\begin{lemma}\label{convolution}  Suppose that $f, g:\zz\rightarrow\mathbb{C}$ are functions so that $$\ee(f)=\alpha,$$ $$\ee(g)=\beta$$ and which have the property that they are majorized (respectively) by pseudorandom functions $\nu, \mu:\zz\rightarrow\mathbb{C}$, that is, $$0\leq f(x)\leq \nu(x)$$ and $$0\leq g(x)\leq\mu(x)$$ for every $x\in\zz$.   Then for every $\epsilon>0$ \begin{equation}\label{convolution1}|\{x\in\zz:(f\ast g)(x)>0\}|\geq\big(\frac{\alpha+\beta}{2}-\epsilon\big)N.\end{equation}
\end{lemma}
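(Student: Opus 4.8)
The plan is to identify the set in question with a sumset and then show that $f*g$ cannot be too concentrated. Since $f,g\ge 0$, for every $x$ we have $(f*g)(x)\ge 0$, with $(f*g)(x)>0$ exactly when $x\in\mathrm{supp}(f)+\mathrm{supp}(g)$; thus $S:=\{x:(f*g)(x)>0\}=\mathrm{supp}(f)+\mathrm{supp}(g)$, and $f*g$ is supported on $S$. Since $\sum_x(f*g)(x)=\big(\sum_x f(x)\big)\big(\sum_x g(x)\big)=N^2\alpha\beta$, we get the identity $\langle f*g,1_S\rangle=N^2\alpha\beta$. Next I would apply the decomposition of Lemma \ref{decomposition} to $f$ (majorant $\nu$) and to $g$ (majorant $\mu$), with a common exponent $s>2$ close to $2$ and small parameters $\sigma,\epsilon_0>0$ to be fixed at the end, writing $f=f_1+f_2$, $g=g_1+g_2$ with $0\le f_1,g_1\le 1+\sigma$, $\mathbb{E}f_1=\alpha$, $\mathbb{E}g_1=\beta$, $\|\widehat{f_2}\|_\infty,\|\widehat{g_2}\|_\infty\le\epsilon_0/16$, and $\|\widehat{f_i}\|_s,\|\widehat{g_i}\|_s\ll 1$.

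Expanding $f*g=f_1*g_1+f_1*g_2+f_2*g_1+f_2*g_2$ and pairing with $1_S$, the main term obeys $\langle f_1*g_1,1_S\rangle\le\|f_1*g_1\|_\infty|S|\le(1+\sigma)\min(\alpha,\beta)N|S|$, using $0\le f_1\le 1+\sigma$ and $\sum_y g_1(y)=N\beta$ (and symmetrically $\le(1+\sigma)N\alpha$). For the mixed terms $f_1*g_2$ and $f_2*g_1$, Plancherel and Cauchy--Schwarz suffice: e.g. $\langle f_2*g_1,1_S\rangle=N^2\sum_\xi\widehat{f_2}(\xi)\widehat{g_1}(\xi)\overline{\widehat{1_S}(\xi)}$ has modulus $\le N^2\|\widehat{f_2}\|_\infty\|\widehat{g_1}\|_2\|\widehat{1_S}\|_2\ll\epsilon_0 N^2$, since $g_1$ is bounded so $\|\widehat{g_1}\|_2\ll 1$, while $\|\widehat{1_S}\|_2=(|S|/N)^{1/2}\le 1$. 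The delicate term is $f_2*g_2$, where both factors are only $L^s$-controlled; here I would peel off a single $L^\infty$ factor by $|\widehat{f_2}(\xi)|\le\|\widehat{f_2}\|_\infty^{\theta}|\widehat{f_2}(\xi)|^{1-\theta}$ for a small $\theta\in(0,1)$ and bound the remaining sum by Hölder with exponents $\big(\frac{s}{1-\theta},s,\frac{s}{s-2+\theta}\big)$, using $\|\widehat{f_2}\|_s,\|\widehat{g_2}\|_s\ll 1$ from Lemma \ref{decomposition}(iv) and $\|\widehat{1_S}\|_{s/(s-2+\theta)}\le 1$ (valid once $s<4-2\theta$, by interpolating $\|\widehat{1_S}\|_\infty=|S|/N\le 1$ with $\|\widehat{1_S}\|_2\le 1$). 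This yields $\langle f_2*g_2,1_S\rangle\ll\epsilon_0^{\theta}N^2$.

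Combining, $N^2\alpha\beta\le(1+\sigma)\min(\alpha,\beta)N|S|+C\epsilon_0^{\theta}N^2$, hence
$$|S|\ \ge\ \frac{N(\alpha\beta-C\epsilon_0^{\theta})}{(1+\sigma)\min(\alpha,\beta)}\ =\ \frac{\max(\alpha,\beta)}{1+\sigma}\,N\ -\ O\Big(\tfrac{\epsilon_0^{\theta}}{\min(\alpha,\beta)}\Big)N.$$
Choosing $\sigma$ small and then $\epsilon_0$ small, both in terms of $\epsilon,\alpha,\beta$ (permissible since in Lemma \ref{decomposition} we may take $W$ large depending on $\epsilon_0$, and $N$ large), gives $|S|\ge(\max(\alpha,\beta)-\epsilon)N\ge\big(\frac{\alpha+\beta}{2}-\epsilon\big)N$, which is (\ref{convolution1}).

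The main obstacle is the cross-term estimate, specifically the $f_2*g_2$ term. Because $f$ and $g$ are weighted characteristic functions supported on sets of density $o(1)$ in $\mathbb{Z}_N$, their Fourier $L^2$ norms — and hence those of $f_2,g_2$ — grow with $N$, so the naive Plancherel/Cauchy--Schwarz route fails for this term. The point is to draw all of the smallness from one $L^\infty$ factor of $\widehat{f_2}$ and to absorb the remaining spread-out Fourier mass of both $f_2$ and $g_2$ into the restriction-type bounds $\|\widehat{f_i}\|_s,\|\widehat{g_i}\|_s\ll 1$, which dictates the precise balance of Hölder exponents above; verifying that these exponents are admissible (in particular $s$ can be taken close enough to $2$) is the crux of the argument.
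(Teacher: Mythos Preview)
Your argument is correct and rests on the same two ingredients as the paper's proof: the decomposition of Lemma~\ref{decomposition} together with the $L^1$/$L^\infty$ balance $\|f_1*g_1\|_1=\alpha\beta N^2$, $\|f_1*g_1\|_\infty\le(1+\sigma)\min(\alpha,\beta)N$ for the main term, and ``peel off an $L^\infty$ factor of $\widehat{f_2}$ or $\widehat{g_2}$, then use the restriction bound $\|\widehat{f_i}\|_s,\|\widehat{g_i}\|_s\ll 1$'' for the error terms. The packaging differs: the paper works with level sets, showing $|\{x:f_1*g_1(x)>\sigma\alpha N\}|\ge(\tfrac{\alpha+\beta}{2}-3\sigma)N$ and then bounding each error convolution in $L^2$ (with $s=3$) so that Chebyshev confines the bad set to measure $\le\sigma N$; you instead pair $f*g$ directly against $1_S$ and bound the error pairings in Fourier by H\"older, which is a bit cleaner and avoids the intermediate Chebyshev step. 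A small bonus of your route is that it actually delivers the stronger conclusion $|S|\ge(\max(\alpha,\beta)-\epsilon)N$, which the paper's argument also yields with the same numerics but does not state. The one point to be careful about is that your H\"older exponents force $2<s<4-2\theta$, so you must invoke Lemma~\ref{discmaj} (hence Lemma~\ref{decomposition}(iv)) for some $s$ strictly between $2$ and $4$; this is of course exactly what the paper does with $s=3$.
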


\begin{proof} Without loss of generality, assume that $0<\alpha\leq \beta$ and let $\sigma$ be a parameter which satisfies $0<\sigma<\epsilon/10$.  We decompose $f=f_1+f_2$ and $g=g_1+g_2$ as in Lemma \ref{decomposition}, with $\epsilon_0$ depending on $\alpha$ and $\sigma$ to be chosen later.  

In order to establish (\ref{convolution1}), it suffices to prove a main term estimate 
\begin{equation}\label{convolutionmain} |\{x\in\zz: f_1\ast g_1(x)>\sigma\alpha N\}|\geq\big(\frac{\alpha+\beta}{2}-3\sigma\big)N\end{equation}
and three error terms of the form
\begin{equation}\label{convolutionerror}|\{x\in\zz:|f_i\ast g_j(x)|>\frac{\sigma\alpha}{10} N\}|\leq\sigma N,\end{equation} where $(i,j)\neq (1,1)$.

For the main term, we first notice that since $f$ and $g$ are both nonnegative 

\begin{equation}\label{fg11}\|f_1\ast g_1\|_1=\|f_1\|_1\|g_1\|_1=\alpha\beta N^2.\end{equation}
If (\ref{convolutionmain}) were false, then we would have $$\|f_1\ast g_1\|_1\leq \sigma\alpha N^2+\alpha(1+\sigma)(\frac{\alpha+\beta}{2}-3\sigma)N^2\leq\alpha\beta N^2-\alpha\sigma N^2$$ which contradicts (\ref{fg11}).

For the error terms, we will show the argument for $j=2$ (the other estimate follows similarly).  It is sufficient to show that

\begin{equation}\label{fstarg2}\|f_i\ast g_2\|_2^2\leq\sigma N\left(\frac{\sigma^2\alpha^2}{200} N^2\right).\end{equation}  Using the convolution identity for Fourier transforms and the Cauchy-Schwarz inequality, we have \begin{align*}\|f_i\ast g_2\|_2^2&=\sum_{x\in\zz}|(f_i\ast g_2)(x)|^2\\
	&=N\sum_{\xi\in\zz}|\widehat{f_i\ast g_2}(\xi)|^2\\
	&=N^3\sum_{\xi\in\zz}|\widehat{f_i}(\xi)|^2|\widehat{g_2}(\xi)|^2\\
	&\leq N^3\|\widehat{f_i}\|_{\infty}^{1/2}\|\widehat{g_2}\|_{\infty}^{1/2}\sum_{\xi\in\zz}|\widehat{f_i}(\xi)|^{3/2}|\widehat{g_2}(\xi)|^{3/2}\\
	&=N^3\|\widehat{f_i}\|_{\infty}^{1/2}\|\widehat{g_2}\|_{\infty}^{1/2}\|\widehat{f_i}\|_{3}^{3/2}\|\widehat{g_2}\|_{3}^{3/2}\\
	&\ll N^3 \epsilon_0^{1/2}.
\end{align*}

Hence, to ensure (\ref{fstarg2}) we simply require $\epsilon_0^{1/2}\leq \sigma^3\alpha^2/200$, or equivalently $\epsilon_0\leq\frac{\sigma^{6}\alpha^{4}}{2^{4}5^{2}}$.
\end{proof}


\section{Sumsets of Positive Density Subsets of $\mathbb{Z}_m^\ast$}


Let $m\in \mathbb{Z}^+$.  For $B\subset \mathbb{Z}_m$ and $x\in \mathbb{Z}_m$, denote
$$r_B(x) = |\{(b,b')\in B\times B: b+b'=x\}|.$$

In this section our main objective is to prove the following:

\begin{proposition}\label{representations}Let $\alpha>0$.  Suppose $m\in \mathbb{Z}^+$ is squarefree, and let $B\subset \mathbb{Z}_m^\ast$ satisfy $|B|\geq \alpha \varphi(m)$.  Then there exists an absolute constant $C$ such that if $m\geq m_0=m_0(\alpha)$ and $k\in\mathbb{Z}^+$ then
$$\sum_{x\in \mathbb{Z}_m} r_B(x)^k \leq \left(\frac{e^{C k^3\log(k)}}{\alpha^2}\right) \frac{|B|^k\varphi(m)^k}{m^{k-1}}.$$
\end{proposition}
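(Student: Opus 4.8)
\section*{Proof proposal for Proposition \ref{representations}}

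\textbf{Overall strategy.} The plan is to exploit the multiplicative structure $\mathbb{Z}_m\cong\prod_{p\mid m}\mathbb{Z}_p$ (valid since $m$ is squarefree) together with the single most important feature distinguishing a dense subset of $\mathbb{Z}_m^\ast$ from a dense subset of $\mathbb{Z}_m$: a set $B\subset\mathbb{Z}_m^\ast$ with $|B|\ge\alpha\varphi(m)$ cannot be concentrated in too few residue classes modulo any prime $p\mid m$, because each such class contains at most $\varphi(m/p)$ elements of $\mathbb{Z}_m^\ast$.

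\textbf{Step 1: reduction to a difference moment.} First I would observe that $\sum_{x}r_B(x)^k$ counts the $2k$--tuples $(b_1,b_1',\dots,b_k,b_k')\in B^{2k}$ with $b_1+b_1'=\cdots=b_k+b_k'$; writing $d_i=b_1-b_i$ this equals $\sum_{d_2,\dots,d_k\in\mathbb{Z}_m}P(d_2,\dots,d_k)\,P(-d_2,\dots,-d_k)$ where $P(d_2,\dots,d_k)=|\{b\in B:\ b-d_i\in B\ (2\le i\le k)\}|$. Cauchy--Schwarz then gives $\sum_x r_B(x)^k\le\sum_{d_2,\dots,d_k}P(d_2,\dots,d_k)^2=\sum_{s\in\mathbb{Z}_m}|B\cap(B-s)|^k=:T_k(B)$, and since $-B\subset\mathbb{Z}_m^\ast$ we have stayed inside the same class of sets. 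So it suffices to bound $T_k(B)=\sum_s A_B(s)^k$ with $A_B(s)=|B\cap(B-s)|$.

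\textbf{Step 2: a slicing recursion over one prime.} For $p\mid m$ write $m=pm'$, identify $\mathbb{Z}_m^\ast\cong\mathbb{Z}_{m'}^\ast\times\mathbb{Z}_p^\ast$ and set $B_j=\{b'\in\mathbb{Z}_{m'}^\ast:(b',j)\in B\}$ for $j\in\mathbb{Z}_p^\ast$. Tracking the $\mathbb{Z}_p$--coordinate one gets the exact identity $A_B(s',s_p)=\sum_{j-l=s_p}A_{B_j,B_l}(s')$, a sum of at most $p-1$ nonnegative terms. Applying the power--mean inequality to those terms, and then Cauchy--Schwarz in the form ``$k$th moment of $A_{B_j,B_l}\le(T_k(B_j)T_k(B_l))^{1/2}$'' (which follows from the identity in Step 1 applied to the pair $B_j,B_l$), yields a recursion bounding $T_k(B)$ in terms of $\sum_j T_k(B_j)$ and $\big(\sum_j T_k(B_j)^{1/2}\big)^2$. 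Keeping the cases $s_p=0$ and $s_p\ne0$ separate, this recursion is \emph{exactly tight} when $B=\mathbb{Z}_m^\ast$ (where it reproduces $T_k(\mathbb{Z}_p^\ast)=(p-1)^k+(p-1)(p-2)^k$ at each prime).

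\textbf{Step 3: closing the estimate --- the main obstacle.} Iterating Step 2 blindly over all primes of $m$ loses a multiplicative factor $1+O(k/p)$ at each prime, hence an unbounded power of $\log\log m$ in total; so the heart of the argument is to make the per--prime loss depend only on $k$ (and at worst $\alpha$), not on $m$. Here I would split the primes of $m$ into the \emph{small} ones, $p\le 1+1/\alpha$ (there are only $O(1/\alpha)$ of them, a number depending on $\alpha$ alone), and the \emph{large} ones. For a large prime the anti-concentration fact forces $B$ to meet at least $\alpha(p-1)$ of the slices $B_j$, so the convexity loss in Step 2 is governed by $\alpha$ rather than by $p$, and one gets a version of the recursion losing only a bounded factor; running this for all large primes, in an induction on $k$ (feeding the $(k-1)$st moment bound into a Hölder step), produces the exponential factor $e^{Ck^3\log k}$. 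For a small prime, whenever $B$ is concentrated modulo $p$ one passes to the dominant slice, which multiplies the relative density of $B$ by $p-1$; iterating this density--increment over the $O(1/\alpha)$ small primes, together with the easy bound for sets of density close to $1$, absorbs the small primes at a cost depending only on $k$ and $\alpha$ --- this is where the $\alpha^{-2}$ (and the requirement $m\ge m_0(\alpha)$, needed so that every small prime really is ``small'' and so that the finitely many small--prime losses can be swallowed by the constant) enters. The delicate point throughout is precisely the uniformity in $m$: ensuring that the tight-for-$\mathbb{Z}_m^\ast$ recursion of Step 2 degrades by only $O_k(1)$ per prime for an arbitrary dense $B$, which is exactly where the structure of $\mathbb{Z}_m^\ast$ (as opposed to $\mathbb{Z}_m$) is indispensable and is the reason Theorem \ref{sumsetsinP} cannot be strengthened to the conclusion of Theorem \ref{sumsets-size}.
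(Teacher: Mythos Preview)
Your Steps 1 and 2 are correct and set up a recursive framework genuinely different from the paper's argument, but Step 3 does not close. The specific failure is the sentence ``for a large prime the anti-concentration fact forces $B$ to meet at least $\alpha(p-1)$ of the slices $B_j$, so the convexity loss in Step~2 is governed by $\alpha$ rather than by $p$.'' Knowing that at least $\alpha(p-1)$ of the $B_j$ are nonempty does not tame the power-mean step: if $r$ of the terms in a sum of $p-1$ nonnegative quantities are nonzero, the inequality $(\sum a_j)^k\le r^{k-1}\sum a_j^k$ still costs a factor $r^{k-1}\asymp(\alpha p)^{k-1}$, which diverges with $p$. More fundamentally, your recursion compares $T_k(B)$ to quantities like $\sum_j T_k(B_j)$ and $(\sum_j T_k(B_j)^{1/2})^2$, but the inductive target $|B_j|^k\varphi(m')^k/(m')^{k-1}$ depends on the \emph{sizes} $|B_j|$ (which can be highly uneven subject only to $|B_j|\le\varphi(m')$), so even the flat-case identity you note for $B=\mathbb{Z}_m^\ast$ does not propagate to general $B$ without a per-prime loss that multiplies to an unbounded function of $m$. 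The vague ``induction on $k$ via H\"older'' does not rescue this: writing $A_B(s)^k=A_B(s)A_B(s)^{k-1}$ and bounding one factor by $|B|$ loses exactly the factor $m/\varphi(m)$ you cannot afford. In short, you have correctly identified the main obstacle but not overcome it.

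For comparison, the paper avoids the recursion entirely by a linearization: it replaces $r_B(x)$ by the larger $R(x)=|\{(b,r)\in B\times\mathbb{Z}_m^\ast:b+r=x\}|$, so that $R(x)^k$ expands as a sum over $k$-tuples $(b_1,\dots,b_k)\in B^k$ of a \emph{product over primes} $\prod_{p\mid m/(x,m)}(1-(r_p+1)/p)$, where $r_p(b_1,\dots,b_k)$ is the number of distinct residues of the $b_i$ modulo $p$. The whole problem then reduces to showing that for most $k$-tuples one has $r_p=k$ for most primes $p$; this is handled by a separate moment lemma bounding $|\{(b_1,\dots,b_k):\sum_{p:\,r_p<k}1/p\ge\beta\}|$ via a double-counting argument on pairs $(b_i,b_j)$ and primes dividing $b_i-b_j$. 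The $\alpha^{-2}$ arises precisely because that pair count is taken over $\mathbb{Z}_m^\ast\times\mathbb{Z}_m^\ast$ rather than $B\times B$, and the split into small primes ($p\le3k$) versus large primes ($p>3k$) is governed by $k$ alone, never by $\alpha$ or $m$. This is the mechanism you are missing: a way to handle all primes of $m$ \emph{simultaneously} rather than one at a time.
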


As a corollary, we obtain Theorem \ref{ZNstar} using H\"older's inequality.

\bigskip

\noindent
\emph{Proof of Theorem \ref{ZNstar}}: Assume that $B$ is a set which satisfies the hypotheses of Theorem \ref{ZNstar}.  We first assume that $m$ is square free and at the end of the proof, we reduce the general case to the square free one. 

Using H\"older's inequality, for any $k\in \mathbb{Z}^+$, we have
$$\sum_{x\in \mathbb{Z}_m} r_B(x)  = \sum_{x\in B+B} r_B(x) \leq  \left( \sum_{x\in \mathbb{Z}_m} r_B(x)^k\right)^{1/k} \left(\sum_{x\in B+B} 1\right)^{(k-1)/k}.$$
The sum on the left is just $|B|^2$.  Hence, Proposition \ref{representations} implies that
\begin{eqnarray}
|B+B| & \geq & \frac{|B|^{2k/(k-1)}}{\left(\sum_{x\in \mathbb{Z}_m} r_B(x)^k\right)^{1/(k-1)}}\nonumber\\
& \geq & \frac{|B|^{2k/(k-1)}}{(|B|\varphi(m))^{k/(k-1)}} \left(\alpha^{2/(k-1)} e^{-2C(k-1)^2\log(k-1)}\right) m\nonumber\\
& = & \alpha^{1+\frac{3}{k-1}}e^{-2Ck^3\log(k)/(k-1)}m\nonumber\\
& = & \alpha^{1+\frac{6}{k}}e^{-2Ck^2\log(k)}m\nonumber
\end{eqnarray}
for $k>2$.  Taking $k=\lfloor \big(\frac{\log(1/\alpha)}{\log\log(1/\alpha)}\big)^{1/3}\rfloor$ we find
$$|B+B| \geq \alpha e^{-C_2(\log(1/\alpha))^{2/3}(\log\log(1/\alpha))^{1/3}} m$$
for $\alpha$ sufficiently small.  

To deal with the case when $\alpha$ is not small, suppose $\alpha_0$ is the largest density for which we know the theorem is true.  Partition $B$ into $B_1, \dots, B_{\lfloor\alpha/\alpha_0\rfloor}$ sets each of which contains exactly $\alpha_0 \varphi(m)$ consecutive elements of $B$.  We apply the known result to each set $B_j$ for $1\leq j\leq \lfloor\alpha/\alpha_0\rfloor$ to obtain $$|B_j+B_j|\geq \alpha_0 e^{-C_2(\log(1/\alpha_0))^{2/3}(\log\log(1/\alpha_0))^{1/3}} m.$$  Summing over all $j$, we have 
\begin{align*}
	|B+B|&\geq \sum_{j=1}^{\lfloor\alpha/\alpha_0\rfloor}|B_j+B_j|\\
	&\geq  \sum_{j=1}^{\lfloor\alpha/\alpha_0\rfloor}\alpha_0 e^{-C_2(\log(1/\alpha_0))^{2/3}(\log\log(1/\alpha_0))^{1/3}} m\\
	&\geq C_1\alpha e^{-C_2(\log(1/\alpha))^{2/3}(\log\log(1/\alpha))^{1/3}} m
\end{align*}
as desired.  Note that the constant $C_1= e^{-C_2(\log(1/\alpha_0))^{2/3}(\log\log(1/\alpha_0))^{1/3}}$ where $\alpha_0$ is the largest $\alpha$ for which we know the result is true. This completes the proof when $m$ is squarefree.

We reduce the general case to the squarefree one by letting
$$m_1=\prod_{p|m} p,$$
and considering the intervals $I_j=[jm_1,(j+1)m_1)$ for $j=0,\dots, m/m_1-1$.
Let $B_j=B\cap I_j$, denote $\alpha_j=\frac{|B_j|}{m_1}$, and let $J=\{j: \alpha_j>\alpha/2\}$.  Notice that 
$$\sum_{j=1}^{m/m_1-1}\alpha_j=\alpha \frac{m}{m_1}$$ 
and so 
$$\sum_{j\in J}\alpha_j\geq \frac{\alpha}{2}\frac{m}{m_1}.$$
Considering  $I_j$ and $B_j$ as subsets of integers, we note that $(B_j+B_j)\cap (B_i+B_i)=\emptyset$ for $i\neq j$.  For each $j\in J$ we apply the theorem to the translate $B_j - jm_1$ to get $$|B_j+B_j| \geq C_1\alpha_j e^{-C_2(\log(1/\alpha_j))^{2/3}(\log\log(1/\alpha_j))^{1/3}} m.$$  Therefore, as subsets of integers,
\begin{align*}
	|B+B|&\geq \sum_{j\in J}|B_j+B_j|\\
	&\geq  \sum_{j\in J}C_1\alpha_j e^{-C_2(\log(1/\alpha_j))^{2/3}(\log\log(1/\alpha_j))^{1/3}} m_1\\
	&\geq C_1 e^{-C_2(\log(1/\alpha))^{2/3}(\log\log(1/\alpha))^{1/3}}m_1\sum_{j\in J}\alpha_j\\
	&\geq C_1\alpha e^{-C_2(\log(1/\alpha))^{2/3}(\log\log(1/\alpha))^{1/3}} m
\end{align*}
as desired.\qed

\bigskip

\emph{Proof of Proposition \ref{representations}:}.  Let $B$ be a set satisfying the hypotheses of the lemma, and suppose that the prime factorization of $m$ is $m=p_1\dots p_t$. 
Define
\begin{align*}
R(x):=&|\{(b,r)\in B\times\mathbb{Z}_m^\ast: b+r=x\}| \\
=&|\{b\in B: b\not\equiv x\ \mod{p_1},\dots,b\not\equiv x\ \mod{p_t} \}|.
\end{align*}

Then $R(x)$ is simply counting the representations of $x$ as a sum involving an element in $B$ and an element taken from the whole of $\mathbb{Z}_m^\ast$, and in particular $R(x)\geq r_B(x)$.  Although $R(x)$ is larger than the function $r_B(x)$, it is easier to control.  Our goal is to produce a good upper bound on the $k$th moment of $R(x)$.  Define
$$S := \sum_{x\in\mathbb{Z}_m} R(x)^k.$$

We begin by separating the values of $x$ into partitions based on the value of $(x,m)$.  More specifically, for $d|m$ let
\begin{align*}
X_d:= & \{x\in [0,m-1]: (x,m)=d\}\\
= & \{x\in [0,m-1]: x=dl\ {\rm for\ some\ }l\in [0,m/d -1]\ {\rm with\ } (l,m/d)=1\}.
\end{align*}
Then we have
$$S= \sum_{d|m} \sum_{x\in X_d} R(x)^k = \sum_{d|m} \sum_{x\in X_d} |\{b\in B:  b\not\equiv x\ \mod{p}\text{ for all }\ p|m/d\}|^k$$
since the conditions $b\not\equiv x\ \mod{p}$ for $p|d$ are certainly satisfied by every $b\in B$ by the condition $(b,m)=1$.

Now, we denote the inner sum by $S_d$, so that
$$S\leq \sum_{d|m} S_d.$$
Expanding the $k$th power in $S_d$ and rearranging the order of summation gives
$$S_d \leq \sum_{\ktup\in B} \sum_{x\in X_d \atop \ktup\not\equiv x\ \mod{p}\text{ for all }\ p|m/d} 1.$$
Now, fix a $k$-tuple $\ktup\in B$.  For this $k$-tuple, the contribution of the inner sum above is
\begin{equation}\label{needaname}
|X_d\cap\{x\in[0,m-1]: x\not\equiv \ktup\ \mod{p}\text{ for all }\ p|m/d\}|
\end{equation}
which is the same as
$$|\{l\in [0,m/d-1]: (l,m/d)=1\ {\rm and\ } l\not\equiv \ktup\ \mod{p}\text{ for all }\ p|m/d\}|$$
$$= |\{l\in[0,m/d-1]: l\not\equiv 0,\ktup\ \mod{p}\text{ for all }\ p|m/d\}|.$$
Defining
$$\rpa:=|\{s\in [0,p-1]: b_i \equiv s \mod{p}\ {\rm for\ some\ } i=1,\dots, k\}|$$
we see that estimating (\ref{needaname}) is equivalent to estimating
$$\prod_{p|m/d}(p-\rpa-1) = \frac{m}{d}\prod_{p|m/d}\left(1-\frac{\rpa+1}{p}\right).$$

Hence, we have shown that 
\begin{equation}\label{S}S_d\leq \sum_{\ktup\in B} \frac{m}{d} \prod_{p|m/d} \left( 1-\frac{\rpa +1}{p}\right).
\end{equation}
In order to bound this sum from above we need to understand the function $\rpa$.   We notice that if $p$ is much larger than $k$, then a random $k$-tuple will intersect  $k$ distinct residue classes$\pmod{p}$ with high probability, and so $\rpa$ is typically of size $k$.  The following lemma quantifies this fact.

\begin{lemma} \label{nonuniform ktuples} For $\ktup\in B$ let
$$f(\ktup)= \sum_{p | m \atop \rpa \leq k-1} {1 \over p}.$$
Then there is an absolute constant $c>0$ such that for every $\beta\in\mathbb{R}^+$ we have
$$\left|\left\{ \ktup\in B: f(\ktup) \geq \beta\right\}\right|\leq k^2 2^{- \exp(\beta/ck^2)} |B|^{k-2} \varphi(m)^2.$$
\end{lemma}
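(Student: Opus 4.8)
The plan is to estimate the size of the set $\{(\ktup)\in B^k : f(\ktup)\geq\beta\}$ by a first-moment (Markov-type) argument applied not to $f$ itself but to an exponential moment of $f$, since $f$ is a sum over primes dividing $m$ and we expect the relevant large-deviation behaviour to be doubly exponential in $\beta$. First I would fix a single prime $p\mid m$ and understand the event $\rpa\leq k-1$: this says that the $k$ residues $b_1,\dots,b_k \bmod p$ occupy at most $k-1$ distinct classes, i.e. at least one coincidence $b_i\equiv b_j\pmod p$ occurs. For a uniformly random $k$-tuple in $\mathbb{Z}_p$ the probability of such a coincidence is at most $\binom{k}{2}/p$ by a union bound. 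The subtlety is that the $b_i$ range over $B\subset\mathbb{Z}_m^\ast$, not over all of $\mathbb{Z}_m$, so I must pass to the quotient: by CRT the reduction map $B\to\mathbb{Z}_p$ has fibers of size at most $|B|/ (p-1)\cdot(\text{something})$ — more precisely, since $B\subset\mathbb{Z}_m^\ast$, each nonzero residue mod $p$ is hit by at most $|B|\cdot\tfrac{\varphi(m)/\varphi(p)}{\text{?}}$... the clean statement I actually want is: for any fixed pair $i\neq j$, $|\{(\ktup)\in B^k : b_i\equiv b_j \pmod p\}|\leq |B|^{k-1}\cdot \max_s|\{b\in B: b\equiv s\pmod p\}|$, and this last max is at most $|B|/(p-1)$ only on average, so instead I would bound it crudely by noting $B\subset\mathbb{Z}_m^\ast$ and the fibers of $\mathbb{Z}_m^\ast\to\mathbb{Z}_p^\ast$ all have size $\varphi(m)/(p-1)$, giving $\leq |B|$ trivially but $\leq \varphi(m)/(p-1)\leq 2\varphi(m)/p$ for the full group; taking the full-group bound $\varphi(m)/(p-1)$ as the fiber size is the right move since we only need an upper bound and $|B|\le\varphi(m)$. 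Thus $|\{(\ktup)\in B^k: \rpa\leq k-1\}| \leq \binom{k}{2}\,|B|^{k-1}\,\varphi(m)/(p-1) \leq k^2 |B|^{k-2}\varphi(m)^2 / p$ after one more use of $|B|\le\varphi(m)$ and absorbing constants.

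Next I would set up the exponential moment. Let $t>0$ be a parameter to be chosen, and consider
$$\sum_{(\ktup)\in B^k} \exp\!\big(t f(\ktup)\big) = \sum_{(\ktup)\in B^k}\ \prod_{p\mid m,\ \rpa\leq k-1} e^{t/p}.$$
Writing $e^{t/p} \leq 1 + \tfrac{2t}{p}$ for $t/p$ small (which holds once we know the only primes contributing are bounded below, or else crudely for all $p$ after a further split) and expanding the product over primes into a sum over squarefree divisors $e\mid m$ with all prime factors "bad" for the given tuple, I would interchange sums to get a bound of the shape
$$\sum_{e\mid m} (2t)^{\omega(e)} \Big(\prod_{p\mid e} \tfrac1p\Big)\ |\{(\ktup)\in B^k : \rpa\leq k-1\ \forall p\mid e\}|.$$
The coincidence events for distinct primes $p\mid e$ are "independent" in the CRT sense, so the inner cardinality factors as $\leq |B|^{k-2}\varphi(m)^2 \prod_{p\mid e}(k^2/p)$ — here I need the bound from the previous paragraph applied prime-by-prime, using that fixing a coincidence mod each of $\omega(e)$ distinct primes cuts the count by $\prod (k^2\varphi(p)/p)$ and CRT lets these multiply; being careful that the "$-2$" in the exponent of $|B|$ is not lost (one coincidence already costs one degree of freedom, additional primes cost more, so $|B|^{k-2}$ is safe and in fact generous). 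Summing the resulting multiplicative function over $e\mid m$ gives
$$\sum_{(\ktup)\in B^k} e^{tf(\ktup)} \ \leq\ |B|^{k-2}\varphi(m)^2 \prod_{p\mid m}\Big(1 + \tfrac{2tk^2}{p^2}\Big)\ \leq\ |B|^{k-2}\varphi(m)^2\, \exp\!\Big(2tk^2\sum_p \tfrac1{p^2}\Big)\ \leq\ |B|^{k-2}\varphi(m)^2\, e^{Ctk^2}$$
for an absolute constant $C$, since $\sum_p p^{-2}$ converges. Then Markov's inequality gives
$$|\{(\ktup)\in B^k: f(\ktup)\geq\beta\}| \ \leq\ e^{-t\beta}\,|B|^{k-2}\varphi(m)^2\, e^{Ctk^2},$$
and optimizing — actually the doubly-exponential form in the statement means I should instead bound $\sum e^{tf}$ with $t$ as large as allowed, keeping $e^{t/p}\leq 1+2t/p$ valid, i.e. $t\asymp$ (smallest prime threshold); more cleanly, one iterates: restricting to $p > ck^2$ makes each factor $(1+2tk^2/p^2)$ controllable and the finitely many small primes $p\leq ck^2$ are handled separately since each contributes at most $1/p$ to $f$, a bounded total, so they only change $\beta$ by an additive constant. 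Choosing $t = c' 2^{\beta/(ck^2)}$ — wait, rather: the bound $e^{tf}\le \prod(1+2t/p)$ over bad primes forces $t\lesssim$ min bad prime; pushing $t$ up to the point where $2t/p\approx 1$ across the dyadic range of primes and summing the geometric-type series yields, after optimization in $t$, exactly the doubly-exponential $2^{-\exp(\beta/ck^2)}$ savings.

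The main obstacle, and the step I would spend the most care on, is getting the doubly-exponential rate $2^{-\exp(\beta/ck^2)}$ rather than a merely exponential $e^{-c\beta/k^2}$ out of the moment computation. A plain exponential moment with a fixed $t$ only gives $e^{-c\beta}$; to get the stronger bound one must exploit that $f(\ktup)$ is a sum of the form $\sum_{p\ \mathrm{bad}} 1/p$ where each prime $p$ is "bad" only with probability $\asymp k^2/p$, so that the number of bad primes below $x$ is Poisson-like with mean $\asymp k^2\log\log x$, and hence $f$ (which weights bad prime $p$ by $1/p$) has a distribution whose upper tail at level $\beta$ requires $\gtrsim e^{\beta/k^2}$ bad primes concentrated among the small primes — a doubly exponentially unlikely event. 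Concretely I would prove this by choosing the moment parameter $t$ to depend on $\beta$, specifically $t\approx$ the reciprocal of the largest prime we can afford, and carefully splitting $\sum_{p\mid m, \rpa\le k-1} 1/p$ at a threshold $P(\beta)$ growing doubly-exponentially in $\beta/k^2$: below $P(\beta)$ the full sum $\sum_{p<P}1/p \approx \log\log P(\beta) \approx \beta/k^2 \cdot(\text{small})$ cannot reach $\beta$ unless essentially all small primes are bad, whose probability is $\prod_{p<P}(k^2/p) = 2^{-\sum_{p<P}\log(p/k^2)} \le 2^{-\exp(\beta/ck^2)}$; above $P(\beta)$, the contribution $\sum_{p>P}1/p$ needed to reach $\beta$ is so large it forces many bad large primes, again with a super-exponentially small probability handled by the exponential moment with $t\asymp P(\beta)$. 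Balancing these two regimes is exactly where the $2^{-\exp(\beta/ck^2)}$ and the constant $c$ are pinned down; the polynomial prefactor $k^2$ in the statement is then just bookkeeping from the union bound over the $\binom k2$ coincidence pairs and the two "lost" degrees of freedom $|B|^{k-2}$.
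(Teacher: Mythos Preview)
Your approach is genuinely different from the paper's, and as written it has a real gap.

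\textbf{What the paper does.}  The paper does \emph{not} take an exponential moment of the $k$-tuple quantity $f(\ktup)$.  Instead it first reduces to a \emph{pairwise} problem: if $f(\ktup)\ge\beta$, then by pigeonhole there exist indices $i\neq j$ with
\[
g(b_i,b_j):=\sum_{p\mid m,\ p\mid b_i-b_j}\frac1p\ \ge\ \frac{\beta}{\binom k2}.
\]
Since a given pair $(b_i,b_j)$ lies in at most $k^2|B|^{k-2}$ tuples, this immediately explains the factor $k^2|B|^{k-2}$.  The paper then bounds the $l$-th \emph{polynomial} moment
\[
\sum_{b,b'\in B}g(b,b')^l\ \le\ \varphi(m)^2\,(c\log l)^l,
\]
by expanding the power, using $\#\{(b,b'):b\equiv b'\ \mathrm{mod}\ \lcm(p_1,\dots,p_l)\}\le\varphi(m)^2/\varphi(\lcm(p_1,\dots,p_l))$, and estimating $\sum_{p\mid m}1/(p(p-1)^{1/l})\ll\log l$.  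Markov on the $l$-th moment with $l=\exp(\beta/ck^2)$ gives $(c\log l\cdot\binom k2/\beta)^l=2^{-l}$, and the doubly-exponential rate drops out in one line.

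\textbf{Where your argument breaks.}  Your key step is the claimed bound
\[
\big|\{\ktup\in B^k:\ r_p(\ktup)\le k-1\ \text{for all }p\mid e\}\big|\ \le\ |B|^{k-2}\varphi(m)^2\prod_{p\mid e}\frac{k^2}{p}.
\]
This is false once $\omega(e)\ge 3$.  Take three distinct primes $p_1,p_2,p_3\mid e$ and force the three \emph{disjoint} coincidences $b_1\equiv b_2\ (p_1)$, $b_3\equiv b_4\ (p_2)$, $b_5\equiv b_6\ (p_3)$.  The count of such tuples is $|B|^{k-3}\varphi(m)^3\prod_s 1/(p_s-1)$, which exceeds $|B|^{k-2}\varphi(m)^2\prod_s 1/(p_s-1)$ by a factor $\varphi(m)/|B|=\alpha^{-1}$.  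In general, with $r$ disjoint coincidence pairs one gets $|B|^{k-r}\varphi(m)^r\prod 1/(p-1)$; the ``extra'' degrees of freedom you lose are replaced by $\varphi(m)$, not by $|B|$, so your remark that ``$|B|^{k-2}$ is safe and in fact generous'' is exactly backwards.  As a consequence, your product bound $\prod_{p\mid m}(1+2tk^2/p^2)$ is not valid, and the whole chain leading to $e^{Ctk^2}$ collapses.

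Even setting that aside, the exponential moment you compute cannot yield the doubly-exponential rate.  The approximation $e^{t/p}\le 1+2t/p$ forces $t\lesssim p$, so $t$ is bounded by the smallest relevant prime; this caps the saving from Markov at $e^{-t\beta}$ with $t$ of merely polynomial size in the threshold, not $t\sim\exp(\beta/ck^2)$.  Your final paragraph tries to recover the doubly-exponential rate by a separate threshold argument (``all small primes bad''), but this is a different mechanism altogether and is not carried out.  The paper avoids all of these difficulties by passing to pairs first: the two-variable moment $\sum g(b,b')^l$ has no combinatorial explosion, and the choice $l=\exp(\beta/ck^2)$ is exactly what produces $2^{-\exp(\beta/ck^2)}$.
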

\begin{proof}
The result will follow from optimizing a double-counting argument on the quantity
$$\sum_{b_i,b_j\in B} \left(\sum_{p|b_i-b_j \atop p|m}\frac{1}{p}\right)^l$$
over positive integer values of $l$.

\smallskip

\noindent
{\bf Upper Bound: } We expand out the exponent and rearrange summation to get
$$\sum_{p_1,\dots, p_l | m} \frac{1}{p_1\cdots p_l}\sum_{b_i,b_j\in B\atop b_i\equiv b_j\ \mod{\lcm(p_1,\dots,p_l)}} 1.$$
Now, fixing the $l$-tuple $\tup{p}{l}$, we suppose that the distinct primes among this $l$-tuple are $\tup{p}{u}$.  Then the inner sum above is bounded above by
$$\sum_{x,y\in\mathbb{Z}_m^\ast \atop x\equiv y\ \mod{\lcm(p_1,\dots,p_l)}} 1 = \frac{\varphi(m)^2}{\varphi(\lcm(p_1,\dots,p_l))}.$$
It follows that the original quantity is bounded from above by
\begin{eqnarray}
\varphi(m)^2 \sum_{p_1,\cdots,p_l | m}{1 \over p_1 \dots p_l \varphi(\lcm(p_1,\dots,p_l))}&\leq&
\varphi(m)^2 \left ( \sum_{p |m} {1 \over p (p-1)^{1/l}} \right )^l.\nonumber
\end{eqnarray}
Splitting the remaining sum based on whether $p$ is greater than or less than $l^l$ and analyzing appropriately we see that the inner sum over $p|m$ is smaller than
\begin{eqnarray}
\sum_{2 \leq p \leq l^l} {1 \over p}\ + \sum_{n \geq l^l} {1 \over n^{1+1/l}}
& \ll & \log(l) + \int_{l^l}^\infty {dx \over x^{1+1/l}} \nonumber\\
& \ll & \log(l).
\end{eqnarray}
It follows that
$$\sum_{b_i,b_j\in B} \left(\sum_{p|b_i-b_j \atop p|m}\frac{1}{p}\right)^l\leq \varphi(m)^2 (c \log(l))^l,$$
where $c > 0$ is some constant.

\smallskip

\noindent
{\bf Lower Bound: } Let
$$K=K(\beta)=\left\{\ktup\in B: \sum_{p | m \atop \rpa \leq k-1} {1 \over p}\ \geq\ \beta\right\}.$$
Given $\ktup\in K$, we have
$$\sum_{i,j=1,\dots,k\atop i\neq j} \sum_{p|m \atop p|b_i-b_j} \frac{1}{p} \geq \sum_{p|m\atop p|b_i-b_j\ {\rm for\ some\ } i\neq j} \frac{1}{p} = \sum_{p | m \atop \rpa \leq k-1} {1 \over p}\ \geq\ \beta$$
so there must be some pair, $b_i,b_j$, such that
$$\sum_{p | m \atop p | b_i-b_j} {1 \over p}\ \geq\ {\beta \over
{k \choose 2}}.$$
At least one such pair comes from each $k$-tuple $\ktup\in K$, and a given pair $b_i,b_j$ can appear in at most $k^2|B|^{k-2}$ $k$-tuples.  We therefore have
$$\sum_{b_i,b_j\in B} \left(\sum_{p|b_i-b_j \atop p|m}\frac{1}{p}\right)^l\geq {|K| \over k^2 |B|^{k-2}} \beta^l {k \choose 2}^{-l}.
$$

\bigskip

\noindent
Combining the upper and lower bounds gives
$${|K| \over k^2 |B|^{k-2}} \beta^l {k \choose 2}^{-l}\ \leq\
c^l (\log l)^l  \varphi(m)^2.$$
It follows that
$$|K|\ \leq\ \beta^{-l} {k \choose 2}^l
k^2 c^l (\log l)^l |B|^{k-2} \varphi(m)^2.$$
Taking
$$l\ =\ \exp( \beta / c k^2)$$
we find that
\[|K|\ \leq\ k^2 2^{-l}|B|^{k-2} \varphi(m)^2.\qedhere\]
\end{proof}

We are now ready to proceed with the proof of Proposition \ref{representations}.  We would like to break the sum $S_d$ into pieces defined by the behavior of the function $\rpa$.  For technical reasons, we must first define some additional notation.   Define
$$m_1:=\prod_{p|m \atop p\leq 3k} p\ \ \ \ \ \ \ \ \ \ \ m_2:= \prod_{p|m \atop p>3k} p,$$
and for any $d|m$ define
$$d_1 := \prod_{p|d \atop p\leq 3k} p\ \ \ \ \ \ \ \ \ \ d_2 := \prod_{p|d \atop p> 3k} p.$$  Then we can rewrite (\ref{S}) to have 
\begin{eqnarray}
S_d & \leq & \sum_{\ktup \in B}{m_1\over d_1}\prod_{p|m_1/d_1}\left(1-{1\over p}\right){m_2\over d_2}\prod_{p|m_2/d_2}\left( 1-{\rpa +1 \over p}\right)\nonumber\\
& = & \varphi(m_1/d_1) \sum_{\ktup \in B}{m_2\over d_2}\prod_{p|m_2/d_2}\left( 1-{\rpa +1 \over p}\right).\nonumber
\end{eqnarray}
For a fixed $d$ and a $\ktup\in B$, let
$$P = \{p|m_2/d_2 : \rpa \leq k-1\}.$$
Expanding $S_d$ as a sum over geometric intervals we have
\begin{eqnarray}
S_d & \leq & \varphi(m_1/d_1) \sum_{j=-\infty}^{\infty} \sum_{\ktup\in B \atop f(\ktup)\in [2^j,2^{j+1}) } {m_2\over d_2}\prod_{p|m_2/d_2}\left( 1-{\rpa +1 \over p}\right)\nonumber\\
& \leq & \varphi(m_1/d_1) \sum_{j=-\infty}^{\infty} \sum_{\ktup\in B \atop f(\ktup)\in [2^j,2^{j+1}) } {m_2\over d_2} \prod_{p|m_2/d_2} \left( 1-{k +1 \over p}\right)\prod_{p\in P} \left( 1-{k +1 \over p}\right)^{-1}.\nonumber
\end{eqnarray}
Now, for fixed $j$ and $\ktup\in B$ we have
\begin{eqnarray}
\log\left(\prod_{p\in P} \left( 1-{k +1 \over p}\right)^{-1}\right) & = & -\sum_{p\in P} \log\left(1-\frac{k+1}{p}\right)\nonumber\\
& = & \sum_{p\in P} \frac{k+1}{p}\left( 1+ \frac{1}{2}\left(\frac{k+1}{p}\right) + \frac{1}{3}\left( \frac{k+1}{p}\right)^2+\dots\right)\nonumber\\
& \leq & (k+1)2^{j+1},
\end{eqnarray}
where the second to last line follows from the fact that all primes in $P$ are larger than $3k$.  
Applying Lemma \ref{nonuniform ktuples} we now have
\begin{eqnarray}\label{Sd}
S_d & \leq & \varphi(m_1/d_1) {m_2\over d_2} \prod_{p|m_2/d_2} \left( 1-{k +1 \over p}\right)\sum_{j=-\infty}^{\infty} \sum_{\ktup\in B \atop f(\ktup)\in [2^j,2^{j+1}) } e^{2(k+1)2^j}\nonumber\\
& \leq & k^2 \varphi(m_1/d_1) {m_2\over d_2} |B|^{k-2}\varphi(m)^2\prod_{p|m_2/d_2} \left( 1-{k +1 \over p}\right)\sum_{j=0}^{\infty}e^{2(k+1)2^j}2^{-\exp(2^j/ck^2)}\nonumber\\
& = & k^2C_k\varphi(m_1/d_1)\frac{m_2}{d_2}|B|^{k-2}\varphi(m)^2\prod_{p|m_2/d_2} \left( 1-{k +1 \over p}\right)
\end{eqnarray}
where 
\begin{equation}\label{C}C_k=\sum_{j=0}^{\infty}e^{2(k+1)2^j}2^{-\exp(2^j/ck^2)}.
\end{equation}
Note that the lower bound on the range of summation in $j$ comes from the fact that for every $k$-tuple $\ktup\in B$ we have $f(\ktup)\geq \sum_{p\leq k} 1/p \gg \log\log k>2 $ provided $k$ is large enough.  Note also that the sum is clearly convergent to some constant dependent only on $k$.

We also note that since we will see that $C_k$ has size $e^{Ck^3\log k}$, in the argument below we have absorbed several smaller functions of $k$ into $C_k$.

The remainder of the proof follows in two steps.  We first find a bound for $S$ in terms of $C_k$ by summing $\sum_{d|m}S_d$ and then we compute an upper bound for $C_k$.  In particular, we will show 

\begin{equation}\label{Sfinal} S\leq \frac{C_k^3|B|^k\varphi(m)^k}{\alpha^2m^{k-1}}
\end{equation}
and 

\begin{equation}\label{Cfinal} C_k \leq e^{Ck^3\log(k)}.
\end{equation}

We start by proving (\ref{Sfinal}).  Summing (\ref{Sd}) over all $d|m$, we have
$$S\leq k^2C_k |B|^{k-2} \varphi(m)^2 \sum_{d|m} \varphi(m_1/d_1) {m_2\over d_2}\prod_{p|m_2/d_2} \left( 1-{k +1 \over p}\right).$$
Noticing that $\varphi(m_1/d_1)\leq m_1$ we have $\varphi(m_1/d_1)m_2\leq m$.  Further, we observe that the number of divisors $d$ of $m$ which will give the same $d_2$ is the number of ways of choosing which primes $p\leq 3k$ will be factors of $d$.  In other words, there are fewer than
$$\sum_{t=0}^{3k} \binom{3k}{t} = 2^{3k}$$
such values of $d$.  We can now rewrite the bound on $S$ as
\begin{eqnarray}
S & \leq & C_k^2 |B|^{k-2} \varphi(m)^2 m \sum_{d_2|m_2} {1\over d_2}\prod_{p|m_2/d_2} \left( 1-{k +1 \over p}\right)\nonumber\\
& = & C_k^2\alpha^{-2} |B|^k m \prod_{p | m_2} \left ( 1 - {k+1 \over p} \right ) \sum_{d_2 | m_2} {1 \over d_2 \prod_{p | d_2} (1 - (k+1)/p)}\nonumber\\
& = & C_k^2\alpha^{-2} |B|^k m \prod_{p | m_2} \left ( 1 - {k+1 \over p} \right )\left ( 1 + {1 \over p - k - 1} \right )\nonumber\\
& \leq & C_k^2\alpha^{-2} |B|^k m \prod_{p | m_2} \left( 1-{1\over p}\right)^k
\end{eqnarray}
where the last step follows by recalling that primes dividing $m_2$ are larger than $k$.
Using the fact that $|B| = \alpha \varphi(m)$ and the fact that
$\varphi(m) = m \prod_{p|m} (1-1/p)$, the bound becomes
$$S \leq \frac{C_k^2}{\alpha^2} \frac{|B|^k\varphi(m)^k}{m^{k-1}}\prod_{p|m_1}\left(1-\frac{1}{p}\right)^{-k}.$$
The remaining product is less than $(3k)^k$, which is smaller than $C_k$.

It remains to prove (\ref{Cfinal}).    Recall that 
\begin{equation}\label{Ck}
C_k=\sum_{j=0}^{\infty}e^{(k+1)2^{j+1}-\exp(2^j/ck^2)\log(2)}
\end{equation}
Expanding out the exponential function in the exponent, we see that the entire exponent is smaller than
$$2(k+1)2^{j+1}-\log(2)-\frac{2^j}{ck^2}-\frac{2^{2j}}{2c^2k^4}.$$
We notice that if $j$ is larger than $\log_2(c^2k^4(k+1)),$ then 
$$\frac{2^{2j}}{2c^2k^4} \geq (k+1)2^{j+1}.$$
Hence for $j\geq\log_2(4c^2k^4(k+1))$, the exponent is smaller than $\log(2)-2^j/ck^2$ and so the tail of the sum is bounded by $e^{1/(ck^2)}$.  Furthermore, we find that  for small $j$, the exponent is maximized when 
$$2^j = ck^3\log(4ck^2(k+1)/\log 2).$$
Hence, splitting the sum in (\ref{Ck}), we have 
$$C_k\leq \log_2(4c^2k^4(k+1))e^{ck^3\log(4ck^2(k+1)/\log 2)}+e^{1/ck^2}.$$  Inequality (\ref{Cfinal}) follows.  

\noindent
Combining (\ref{Sfinal}) and (\ref{Cfinal}), we have
$$S \leq \frac{e^{Ck^3\log(k)}}{\alpha^2} \frac{|B|^k\varphi(m)^k}{m^{k-1}}$$
as desired.\qed


\section{Completion of the Proof of Theorem \ref{sumsetsinPN}}


In this section we complete the proof of Theorem \ref{sumsetsinPN}, which implies Theorem \ref{sumsetsinP}, the main result of this paper.  

Let $n$ be sufficiently large, let $A_n\subset \mathcal{P}_n$ satisfy $|A_n|\geq \delta |\mathcal{P}_n|$, and let $\epsilon > 0$.  Suppose that $G$ is as  in Section 3, and let $\alpha$ be such that $|G|=\alpha\varphi(m)$ (in particular, $\alpha\geq \delta/2$).  Then, by Proposition \ref{sumset1}, for every $W$ sufficiently large in terms of $\delta$ and $\epsilon$ we have
\begin{equation}\label{sumset}
|A_n + A_n| \geq \sum_{x\in G+G} \left(\Delta_x-\epsilon \right) \frac{n}{m}
\end{equation}
where $m=\prod_{p\leq W} p$.
Since $$\sum_{b\in\mathbb{Z}_m^{\ast}}\delta_b\geq \delta\varphi(m),$$ we can show that 
$$\sum_{b\in G} \delta_b \geq \frac{\delta}{2}\varphi(m).$$

Hence, we also see that
\begin{equation}\label{last}\sum_{(b,b')\in G\times G} \left(\frac{\delta_b+\delta_{b'}}{2}\right) \geq \frac{\delta}{2\alpha}|G|^2.
\end{equation}
Setting
$$r(x)=|\{(b,b')\in G\times G: b+b'=x\}|$$
we can see that (\ref{last}) is equivalent to
$$\sum_{x\in G+G} r(x)\gamma_x \geq \frac{\delta}{2\alpha}|G|^2$$
where
$$\gamma_x = \frac{1}{r(x)}\sum_{(b,b'):\ b+b'=x} \left(\frac{\delta_{b}+\delta_{b'}}{2}\right)\leq \Delta_x.$$
Using H\"older's inequality, we have
$$\left(\sum_{x\in G+G} r(x)^k\right)^{\frac{1}{k}}\left(\sum_{x\in G+G} \Delta_x^{\frac{k}{k-1}}\right)^{\frac{k-1}{k}}\geq \frac{\delta}{2\alpha}|G|^2.$$
Applying Proposition \ref{representations} we find
$$\left(\sum_{x\in G+G} \Delta_x^{\frac{k}{k-1}}\right)^{\frac{k-1}{k}}\geq \frac{\delta}{2}\left(\frac{\alpha^{2/k}}{e^{Ck^2\log(k)}}\right) m^{(k-1)/k}.$$
Because $k/(k-1)>1$ and $\Delta_x\leq 1$, we have
$$\sum_{x\in G+G} \Delta_x \geq \left(\frac{\delta}{2}\right)^{k/(k-1)}\alpha^{2/(k-1)}e^{-C\frac{k^3\log(k)}{k-1}} m.$$

Proceeding as in the proof of Theorem \ref{ZNstar} and then returning to (\ref{sumset}), we have
$$|A_n+A_n|\geq \left(C_1\delta e^{-C_2(\log(1/\delta))^{2/3}(\log\log(1/\delta))^{1/3}}-\epsilon\right) n$$
for every $\epsilon>0$.


\section{Acknowledgements}


We are extremely grateful to Ernie Croot for suggestions in proving Theorem \ref{ZNstar}.  We thank Izabella \L aba, Neil Lyall and Akos Magyar for their support and advice.  
\bibliographystyle{plain}

\vspace{.2 in}
  \textsc{Department of Mathematics, University of British Columbia, Vancouver, BC V6T 1Z2}
  
  \textit{E-mail address:} karstenc@math.ubc.ca\\
  
  \textsc{Department of Mathematics, University of Georgia, Athens, GA 30602}
  
  \textit{E-mail address:} mhamel@math.uga.edu\\

\end{document}